\crefname{hypothesis}{Hypothesis}{Hypotheses}
\newcommand{\dzeta}{\,\mathrm{d}\zeta}
\newcommand{\bbC}{\mathbb{C}}
\newcommand{\bbN}{\mathbb{N}}
\newcommand{\bbR}{\mathbb{R}}
\newcommand{\bbOne}{\mathbbm{1}}
\newcommand{\calD}{\mathcal{D}}
\newcommand{\calK}{\mathcal{K}}
\newcommand{\calR}{\mathcal{R}}
\newcommand{\calI}{\mathcal{I}}
\newcommand{\calO}{\mathcal{O}}
\newcommand{\calE}{\mathcal{E}}
\newcommand{\calG}{\mathcal{G}}
\newcommand{\calS}{\mathcal{S}}
\newcommand{\frakR}{\mathfrak{R}}
\title{Interior Eigensolver Based on Rational Filter with Composite
rule}
\author{Yuer Chen\thanks{School of Mathematical Sciences, Fudan
University, Shanghai, China.} \and Yingzhou Li\thanks{School of
Mathematical Sciences, Fudan University, Shanghai,
China~(\email{yingzhouli@fudan.edu.cn}); Shanghai Key Laboratory
for Contemporary Applied Mathematics, Shanghai, China.}}
\begin{document}

\maketitle

\begin{abstract}
    Contour-integral-based rational filter leads to interior eigensolvers for non-Hermitian generalized eigenvalue problems. Based on Zolotarev's third problem, this paper proves the asymptotic optimality of the trapezoidal quadrature of the contour integral in terms of the spectrum separation. A composite rule of the trapezoidal quadrature is derived, and two interior eigensolvers are proposed based on it. Both eigensolvers adopt direct factorization and multi-shift generalized minimal residual method for the inner and outer rational functions, respectively. The first eigensolver fixes the order of the outer rational function and applies the subspace iterations to achieve convergence, whereas the second eigensolver doubles the order of the outer rational function every iteration to achieve convergence without subspace iteration. The efficiency and stability of proposed eigensolvers are demonstrated on synthetic and practical sparse matrix pencils.
\end{abstract}

\begin{keywords}
Generalized eigenvalue problem; non-Hermitian matrix; contour integral;
trapezoidal quadrature; rational optimality; Zolotarev's third problem.
\end{keywords}

\section{Introduction}

We aim to solve the large-scale interior eigenvalue problem for non-Hermitian matrices. Such problems arise from many fields including but not limited to electronic structure calculations, dynamic system simulations, control theory, etc. Most of these applications only require part of the eigenvalues of interest, and many of which are interior eigenvalues.
    
The interior non-Hermitian generalized eigenvalue problem we consider is
\begin{align}\label{eq: problem}
    A x_i = \lambda_i B x_i, \quad \lambda_i \in \calD,
\end{align}
where $\calD$ is the region of interest, matrix pencil $(A, B)$ is diagonalizable, and its eigenvalues are distributed on the complex plane. The goal is to find all eigenpairs $(\lambda_i, x_i)$ in the region $\calD$. The interior eigensolver for this problem can be used to compute eigenpairs in several regions in parallel to obtain the partial or full eigen-decomposition of the matrix of interest.

\subsection{Related work}

Methods for non-Hermitian generalized eigenvalue problems have been developed for decades. The QZ method~\cite{QZ} is a popular one in practice for dense and small-to-medium scale matrices. When a sparse and large-scale matrix is considered, iterative methods~\cite{Lehoucq1998ARPACKUG, krylovschur} are preferred. 

Among iterative methods, many adopt the combination of a contour-based filter and the subspace iteration, e.g., Sakurai-Sugiura~(SS) method~\cite{SAKURAI} 
and variants of FEAST method~\cite{li2021interior, FEASTorigin}. The original SS method suffers from numerical instability due to the ill-conditioned Hankel matrix. Then Sakurai and Sugiura propose CIRR~\cite{CIRR}, which uses Rayleigh-Ritz projection to avoid the explicit usage of the momentum and block version SS method~\cite{IKEGAMI20101927}. The number of linear systems therein is reduced, and so is the order of the Hankel matrix. The FEAST method
originally proposed for Hermitian matrices is extended to non-Hermitian matrices and results in many variants, dual FEAST~\cite{dualFEAST}, BFEAST~\cite{BFEAST}, HFEAST~\cite{HFEAST}, etc. 

For all the contour-based filters or rational filters in the methods above, the convergence and convergence rate highly depend on the locations and weights 
of poles. Although the trapezoidal quadrature\footnote{Trapezoidal rule instead of trapezoidal quadrature is a commonly used terminology.} leads to a good convergence behavior~\cite{dualFEAST}, its optimality remains unknown for non-Hermitian 
matrices. In this paper, we discuss the optimality of the trapezoidal quadrature and its composite rule property. Based on the composite rule, we propose two interior eigensolvers for non-Hermitian generalized eigenvalue problems.

\subsection{Contribution}

Our contribution in this paper has two parts, theory part and algorithm part. These two parts are later referred as the filter design and filter implementation, respectively.

In filter design, we find the optimal rational filter in the sense of spectrum separation for fast convergence of subspace iteration. By making use of the connection with Zolotarev's third problem, we prove that when the contour is a circle, the rational filter used in the inverse power method leads to an optimal separation, while the trapezoidal quadrature leads to an asymptotically optimal separation. 

In filter implementation, we focus on the flexibility of the
trapezoidal quadrature implementation. The main cost of the rational
filter $R_k(z)$ implementation comes from pre-factorization, e.g., LU
factorization, and solving phase, e.g., forward and backward
substitution (triangular solve). In general, the cost of the
pre-factorization is of higher order complexity than that of the
solving phase. The composite rule of trapezoidal quadrature
establishes a trade-off between the cost of two.

Specifically, given a rational filter $R_k(z)$ from the trapezoidal quadrature, we derive a composite rule as $R_k(z) = R_{k_2}(M(R_{k_1}(z)))$ for $k = k_1 k_2$ and $M(\cdot)$ being a M{\"o}bius transform. Motivated by~\cite{li2021interior}, two novel algorithms are proposed based on the composite rule, both of which implement $R_{k}(\cdot)$ with an inner-outer structure. The inner rational function $R_{k_1}(\cdot)$ is implemented with direct matrix factorizations, whereas the outer rational function $R_{k_2}(\cdot)$ is implemented via the multi-shift generalized minimal residual method~(GMRES). The first algorithm adopts the subspace iteration framework. It substitutes matrix factorizations with the solves, which may reduce the total runtime for large cases. The second algorithm discards the framework of subspace iteration. It achieves target precision by dynamically increasing $k_2$ and reusing Krylov subspace to avoid the increase of the computational cost. These two algorithms hybridize the direct method and iterative method and enable us to use computational resources more effectively. Through the numerical experiments, we find that the second algorithm is more efficient and practical. 

\subsection{Organization}

The rest of this paper is organized as follows. In \cref{sec: Preliminary}, we introduce the basic idea of the contour-integral-based filter and discuss the computational cost and memory cost of the implementation based on the simple rule of trapezoidal quadrature. Later, we introduce our two novel algorithms based on the composite rule of trapezoidal quadrature in \cref{sec: eigensolver}. The optimality of rational functions is discussed in \cref{sec: Optimal} and the derivation of the composite rule is presented in \cref{sec: composite}. In \cref{sec: experiment}, there are some numerical experiments to examine our results of optimality and demonstrate the efficiency of two proposed algorithms. Finally, \cref{sec: conclusion} concludes the paper.

\section{Subspace iteration with rational filter}
\label{sec: Preliminary}

Subspace iteration with rational filter is a class of eigensolvers for interior non-Hermitian generalized eigenvalue problems~\cref{eq: problem}. All eigensolvers in this class use the subspace iteration framework and adopt various filters, i.e., rational functions with different choices of weights and poles. In this section, we will first review the framework of subspace iteration and introduce the contour integral formulation of the spectral projector. Then we demonstrate the process of deriving rational filters from numerical discretizations, alongside the cost of implementing this kind of rational filter.

\subsection{Subspace iteration}\label{subsec:SIintro}

The general framework of the subspace iteration with filter iterates between two phases: 1) refining the subspace via filter, and 2) solving a reduced eigenvalue problem in the subspace.

In the first phase, the filter is applied to an approximate basis of the eigensubspace, and a refined representation of the eigen-subspace is obtained. For non-Hermitian eigenvalue problems, left and right eigen-subspaces are different. We can refine left and right eigen-subspaces by applying the filter twice~\cite{dualFEAST}, or we can only refine the right eigen-subspace and use 
an extra step to obtain an approximation of the left eigen-subspace~\cite{HFEAST}. In the second phase, the original large-scale eigenvalue problem is projected to the left and right eigen-subspaces and reduced to an eigenvalue problem of a much smaller scale. Then the small-scale eigenvalue problem is solved by classical dense eigensolvers, which results in the approximate eigenpairs of the original problem.

Due to the potentially ill-conditioned eigenbasis of non-Hermitian matrices, the generalized Schur vectors could be extracted to represent the eigen-subspaces and lead to a more stable scheme. Such a subspace iteration idea has been combined with FEAST for non-Hermitian matrices and results in HFEAST~\cite{HFEAST}. Denote the approximate basis of the right eigen-subspace as $U$. The orthonormal basis of $U$ is denoted as $V = \mathrm{orth}(U)$. As in HFEAST~\cite{HFEAST}, the orthonormal basis of the left eigen-subspace could be constructed as $W = \mathrm{orth}(AV - \sigma BV)$, where $\sigma$ is the shift away from the eigenvalues of $(A, B)$. After obtaining the approximate orthonormal basis of the left and right eigen-subspaces, the reduced generalized eigenvalue problem $(W^* A V, W^*B V)$ is solved by the QZ algorithm and yields the generalized Schur form,
\begin{equation*}
    P_L^* (W^*AV) P_R = H_A \quad \text{and} \quad P_L^* (W^*BV) P_R = H_B,
\end{equation*}
where $P_L$ and $P_R$ are orthogonal matrices, $H_A$ and $H_B$ are upper
triangular matrices. The approximate eigenvalues are,
\begin{equation*}
    \tilde{\lambda}_i = (H_A)_{i,i} / (H_B)_{i,i}.
\end{equation*} 
We further calculate the left and right eigenvectors 
of $(H_A, H_B)$ and denote them as $V_L$ and $V_R$ respectively. 
The approximate left and right eigenvectors of $(A, B)$ are, respectively,
\begin{equation*}
    W P_L V_L \quad \text{and} \quad V P_R V_R.
\end{equation*}

\begin{algorithm}
    \caption{Subspace Iteration with Filter}
    \label{alg: SIsketch}
    \begin{algorithmic}[1]

    \REQUIRE matrix pencil $(A,B)$, region $\calD$, number of columns
    $n_\mathrm{col}$, shift $\sigma$.

    \ENSURE All approximate eigenpairs 
    $(\widetilde{\lambda}_i,\widetilde{x}_i),\tilde{\lambda}_i \in \calD$.
    \STATE Generate random $Y^{N\times n_{\mathrm{col}}}$
    \WHILE {not converge}
        \STATE $U = \rho(B^{-1}A)Y$
        \STATE $V = \mathrm{orth}(U)$
        \STATE $W = \mathrm{orth}(AV - \sigma BV)$
        \STATE $[H_A, H_B, P_L, P_R, V_L, V_R] = \mathrm{qz}(W^*AV, W^*BV)$
        \STATE $\widetilde{\lambda}_i = (H_A)_{i,i}/(H_B)_{i,i}$, 
        $\widetilde{x}_i=V P_R (V_R)(:,i)$ and 
        $Y = (\widetilde{x}_1,\dots,\widetilde{x}_{n_{\mathrm{col}}})$
    \ENDWHILE
    \end{algorithmic}
\end{algorithm}

The overall framework of the subspace iteration in HFEAST~\cite{HFEAST} with filter $\rho(\cdot)$ is summarized in \cref{alg: SIsketch}. In the rest of the paper, we adopt the subspace iteration as in \cref{alg: SIsketch} and focus on the design and implementation of $\rho(\cdot)$. Besides, we abbreviate the lines 4 to 7 as $[Y,\widetilde{\Lambda},\widetilde{X}]=\mathrm{HSRR}(A, B, U, \sigma)$, where $\widetilde{\Lambda}=\mathrm{diag}\{\widetilde{\lambda}_1,\dots,\widetilde{\lambda}_{n_\mathrm{col}}\}$ and
$\widetilde{X}=(\widetilde{x}_1,\dots,\widetilde{x}_{n_\mathrm{col}})$.

\subsection{Contour-based filter and discretization}
\label{subsec:discretization}

The basic idea of designing a filter is to construct a matrix function whose eigenvalues are close to zero outside the region $\calD$ and different from zero inside $\calD$. One good choice of matrix functions is the indicator function of $\calD$, which could be constructed via a contour integral enclosing the region $\calD$. The indicator function of $\calD$ via contour integral admits,
\begin{equation} \label{eq: CI}
    f(z) = \frac{1}{2 \pi \imath} \oint_{\Gamma} \frac{1}{\zeta-z}
    \dzeta = 
    \begin{cases}
        1, & z \in \calD\\
        0, & z \not\in \bar{\calD}
    \end{cases},
\end{equation}
where $\Gamma$ is the positively oriented Jordan curve encloses the region $\calD$.

For a diagonalizable matrix pencil $(A, B)$, i.e.,
\begin{equation*}
    A X = B X \Lambda,
\end{equation*}
with $X$ being the right eigenvectors and $\Lambda$ is a diagonal matrix with eigenvalues on its diagonal, the indicator function $f(z)$ applying to $(A,B)$ admits
\begin{equation} \label{eq: CIoperator}
    \begin{split}
        f(B^{-1}A) & = \frac{1}{2\pi\imath} \oint_{\Gamma}
        (\zeta B - A)^{-1} B \dzeta \\
        & = X \bigg[ \frac{1}{2\pi\imath}
        \oint_{\Gamma}(\zeta I - \Lambda)^{-1} \dzeta \bigg] X^{-1}
        = X \bbOne_\calD (\Lambda) X^{-1},
    \end{split}
\end{equation}
where $\bbOne_\calD(\cdot)$ denotes the indicator function for region $\calD$~\footnote{In \eqref{eq: CIoperator}, we implicitly assume that the eigenvalues of $(A, B)$ do not locate on the boundary of $\calD$.}. In~\cite{BFEAST}, a result similar to \cref{eq: CIoperator} is proved, which leads to the theoretical foundation that the contour integral works even if the non-Hermitian system is defective. Various numerical discretizations of the contour integral \cref{eq: CIoperator} lead to various filters. In many applications, especially non-Hermitian eigenvalue problems, the contour $\Gamma$ is circular. In other applications, the contour could be conformally mapped to a circle. Hence, in this paper, we focus on the case that $\Gamma$ is a circle. 

We could reparameterize the circle by $e^{\imath \theta}$ for $\theta \in [0, 2\pi)$. The integral \cref{eq: CI}, then, is a one-dimensional integral and could be numerically evaluated by various quadrature rules. Generally, the contour integral \cref{eq: CI} can be approximated by a numerical quadrature with $k$ quadrature points, written as 
\begin{equation} \label{eq: discretizedCI}
    R_{k}(z) = \sum_{i=1}^k \frac{w_i^{(k)}}{p_i^{(k)} - z},
\end{equation}
where $\{w_i^{(k)}\}_{i=1}^k$ are weights, $\{p_i^{(k)}\}_{i=1}^k$ are poles. Let 
\begin{equation*}
    \calR_{n,m} = \{P(z)/Q(z):\mathrm{deg}(P(z))\le n, \mathrm{deg}(Q(z))\le m\}
\end{equation*}
be the set of rational functions, where $P(z)$ and $Q(z)$ are polynomials and $\mathrm{deg}(\cdot)$ denotes the degree of the polynomial. It is easy to see that $R_{k}(z)\in\calR_{k,k}$, i.e., it is a $k$-th order rational function.

When the discretized contour integral is applied to $(A,B)$, it yields a $k$-th order rational matrix function 
\begin{equation} \label{eq: discretizedCIoperator}
    R_{k}(B^{-1}A) = \sum_{i=1}^k w_i^{(k)} (p_i^{(k)} B - A)^{-1}B.
\end{equation}
One of the common choices is the trapezoidal quadrature. When the contour is a circle, whose center is $c$ and radius is $r$, the integral \cref{eq: CI}  numerically discretized with $k$ quadrature points is denoted as $R_{c,r,k}(B^{-1}A)$, whose poles and weights are
\begin{equation}\label{eq: pwtp}
    p_i^{(k)} = re^{\imath\theta_i^{(k)}}+c,\quad w_i^{(k)} = re^{\imath\theta_i^{(k)}}/k,\quad \theta_i^{(k)} = (2i - 1) \pi/k.
\end{equation}
The matrix function $R_{c,r,k}(B^{-1}A)$ is used as the filter in this paper. We will call it the $k$-th order trapezoidal quadrature.

\subsection{Cost of implementation}\label{subsec:practicalconsideration}

In \cref{eq: discretizedCIoperator}, the considerable computational burden lies in solving the shifted linear systems, $(p_i^{(k)} B - A)^{-1}$ for $i = 1, \dots, k$. When the eigengap between the interior eigenvalues and outer eigenvalues is small, the linear systems will be ill-conditioned,
since the poles are on the contour and their distances to eigenvalues are
bounded by the eigengap. Hence, in most contour-based filters, the shifted linear systems are solved by direct methods. 

The overall computational cost is then divided into two parts: the offline factorization part, e.g., LU factorization, and the online solving part, e.g., triangular solve with the given LU factorization. The computational cost could be written as
\begin{equation*}
    C_{\mathrm{factor}} \times k + C_{\mathrm{apply}} \times k \times
    n_{\mathrm{col}} \times T + o(C_{\mathrm{apply}}),
\end{equation*} 
where $C_{\mathrm{factor}}$ is the cost of a factorization, $C_{\mathrm{apply}}$ is the cost of a solve, $k$ is the number of poles, $n_{\mathrm{col}}$ is the number of columns in $Y$, $T$ is the number of subspace iterations, and $o(C_{\mathrm{apply}}) = o(C_{\mathrm{apply}}(N))$ is the rest cost lower order. To extract the entire eigen-subspace we are interested, it is necessary that $n_\mathrm{col}\ge s$ for $s$ being the number of eigenvalues inside. As for the memory cost, the solving complexity is the same as its memory cost for almost all dense and sparse linear system solvers. Hence, $C_{\mathrm{apply}}$ is also used as the memory cost in storing a factorization. The cost of memory is $kC_{\mathrm{apply}}$.

Throughout the subspace iterations, the tuneable hyperparameters are $k$ and $n_{\mathrm{col}}$. The dependence of $T$ on $k$ and $n_{\mathrm{col}}$ could be reflected by the function value $R_{k}(\lambda_i)$ since we are essentially applying power method with $R_{k}(B^{-1}A)$. Let $\sigma$ be a permutation of $1, 2, \dots, N$, such that
\begin{equation*}
    |R_{k}(\lambda_{\sigma_1})| \geq |R_{k}(\lambda_{\sigma_2})| \geq
    \cdots \geq |R_{k}(\lambda_{\sigma_N})|.
\end{equation*}
Then, the number of subspace iterations $T$ mainly depends on the ratio,
\begin{equation} \label{eq: ratioeig}
    \max_{i > n_{\mathrm{col}}}
    |R_{k}(\lambda_{\sigma_i})| \Bigm/ \min_{\lambda_{\sigma_i} \in \calD}
    |R_{k}(\lambda_{\sigma_i})|.
\end{equation}
When the ratio is greater or equal to one, the subspace iteration would suffer from a divergence issue. When the ratio is less than one, the smaller the ratio, the faster the convergence. In general, larger $k$ and $n_{\mathrm{col}}$ leads to smaller \cref{eq: ratioeig}. However, $k$ is limited by the bottleneck of memory, especially for the large-scale problems. Then the implementation based on the simple rule can only increase $n_{\mathrm{col}}$ to make the ratio \cref{eq: ratioeig} smaller than one. When many outer eigenvalues are very close to the contour and $k$ is small, an extremely large $n_{\mathrm{col}}$ is needed for the convergence. 

We call the formula \cref{eq: discretizedCIoperator} of $R_{c,r,k}(B^{-1}A)$ as the simple rule, which differs from the composite rule introduced in the next section.

\section{Two eigensolvers based on the composite rule}\label{sec: eigensolver}
The implementation based on the simple rule \cref{eq: discretizedCIoperator} needs $k$ matrix factorizations, which brings huge burden on computation and memory for large matrices. In this section, we will start by introducing the composite rule of trapezoidal quadrature without derivation. Then we propose two eigensolvers based on the composite rule, both of which implement $R_{c,r,k}(B^{-1}A)$ with an inner-outer structure.
\subsection{The composite rule}
Given a positive integer $k$ and its integer factorization $k = k_1 k_2$ for $k_1 > 1$ and $k_2 > 1$, the composite rule of trapezoidal quadrature is shown as 
\begin{equation*}
    R_{c,r,k}(z) = R_{0,1,k_2}(M ( R_{c,r,k_1}(z)) ),
\end{equation*}
where $M(z) = (1-z)/z$ is a M\"obius transform. 
That means the $k$-th order trapezoidal quadrature can be rewritten as a composition of a $k_2$-th order trapezoidal quadrature and a tranformed $k_1$-th order trapezoidal quadrature. 

When $k_2$ is even, the composite rule can be rewritten as 
\begin{equation}\label{eq: Rcrm_}
    \begin{aligned}
        &R_{c,r,k}(z) =\sum_{i=1}^{k_2} c_i^{(k_2)}(R_{c,r,k_1}(z) - s_i^{(k_2)})^{-1}R_{c,r,k_1}(z),\\
        &c_i^{(k_2)} = -\frac{1}{k_2}\frac{\sigma_i^{(k_2)}}{1+\sigma_i^{(k_2)}},~s_i^{(k_2)} = \frac{1}{1+\sigma^{(k_2)}_i},
    \end{aligned}
\end{equation}
where $\{\sigma_i^{(k_2)}\}_{i=1}^{k_2}$ are roots of $z^{k_2}=-1$.
\subsection{Interior eigensolver with subspace iteration}
\label{subsec: eigensolver}

Using $R_{c, r, k}(z)$ as the filter in subspace iteration for
a matrix pencil $(A, B)$ requires the evaluation of $R_{c, r, k}(B^{-1} A)
Y$ for $Y$ being a matrix of size $N \times n_{\mathrm{col}}$. By the
composite rule for $R_{c, r, k}(z)$ in \cref{eq: Rcrm_}, the evaluation of
$R_{c, r, k}(B^{-1}A)Y$ could be rewritten as,
\begin{equation} \label{eq: linearsolve1}
    R_{c, r, k}(B^{-1}A)Y = \left( \sum_{i=1}^{k_2}
    c_i^{(k_2)}(R_{c,r,k_1}(B^{-1} A) - s_i^{(k_2)} I)^{-1} \right)
    \left( R_{c,r,k_1}(B^{-1} A) Y \right).
\end{equation}
where the operation $R_{c,r,k_1}(B^{-1} A) Y$ can be written as,
\begin{equation} \label{eq: linearsolve2}
    R_{c, r, k_1}(B^{-1}A)Y = \sum_{i=1}^{k_1}w_i^{(k_1)}(p_i^{(k_1)}B-A)^{-1}BY
\end{equation}
for $\{w_i^{(k_1)}\}$ and $\{p_i^{(k_1)}\}$ being the weights and poles of $R_{c, r,
k_1}(\cdot)$.

In \cref{eq: linearsolve1}, there are inner and outer parts of rational function evaluations. For the inner part, as in \cref{eq: linearsolve2}, we use direct solvers for all these linear systems for the same reason as the simple rule. We pre-factorize all linear systems and denote them as $K_i = p_i^{(k_1)} B - A$ for $i = 1, \dots, k_1$, e.g., $K_i=L_iU_i$.~\footnote{Throughout the numerical section of this paper, dense LU factorization is used by default for dense matrices $A$ and $B$. If $A$ and $B$ are sparse matrices, we adopt the default sparse LU factorization methods in MATLAB.} Once the factorizations are available, the inner part could be addressed efficiently. The inner part \cref{eq: linearsolve2} essentially applies a rational filter of the matrix pencil $(A, B)$ to a set of vectors $Y$. Without loss of generality, we treat the inner part as an operator $G$ acting on $Y$.

For the outer part, we first rewrite \cref{eq: linearsolve1} using the operator $G$,
\begin{equation} \label{eq: linearsolve1abs}
    R_{c, r, k}(B^{-1}A) Y = \sum_{i=1}^{k_2}c_i^{(k_2)}(G - s_i^{(k_2)} I)^{-1}\widetilde{Y}
\end{equation}
for $\widetilde{Y} = G(Y)$. We notice that the eigenvalues of $G$ are clustered, i.e., the eigenvalues outside $\calD$ cluster around zero and the eigenvalues inside $\calD$ cluster around one, see \cref{fig: mobius}. Iterative solvers, especially GMRES, are expected to converge fast. Throughout this paper, we adopt GMRES~\cite{GMRES} as the default iterative solver for \cref{eq: linearsolve1abs} with $G$ being applied as an operator. Recall that GMRES is a Krylov subspace method, by the shift-invariant property of the Krylov subspace, all $k_2$ shifts in \cref{eq: linearsolve1abs} could be addressed simultaneously in the same Krylov subspace, i.e.,
\begin{equation*}
    \begin{split}
        & \calK_n(G-s_i^{(k_2)}I,y)=\calK_n(G,y), \\
        & (G-s_i^{(k_2)}I)V_n=V_n(H_{n,n+1}-s_i^{(k_2)}I_{n,n+1}),
    \end{split}
\end{equation*}
for $i = 1, \dots, k_2$ and $V_n$ denoting the basis of $\calK_n(G, y)$. The multi-shift GMRES~\cite{multihiftGMRES} applies the operator $G$ once per iteration. In all of our numerical experiments, the multi-shift GMRES converges in less than one hundred iterations, and no restarting is needed. 

Using a direct solver and an iterative solver for the inner and outer part
of \cref{eq: linearsolve1}, we obtain an effective algorithm for the
rational matrix function filter. Combining this filter with subspace
iteration leads to our first eigensolver. \Cref{alg: composite} gives the
overall pseudocode, where HSRR is an abbreviation for lines 4 to 7 of
\cref{alg: SIsketch}.

The convergence criterion for interior eigenvalue problem can be quite tricky. The relative error of eigenpair in our algorithms is defined as 
\begin{equation}\label{eq: rel-error}
    e_i=e(\tilde{\lambda}_i, \tilde{x}_i) = \frac{\Vert A \tilde{x}_i -
    B \tilde{x}_i \tilde{\lambda}_i \Vert_2}{(|c| + r)\Vert B \tilde{x}
    \Vert_2},
\end{equation}
where $c$ and $r$ is the center and radius of the region $\calD$. For the non-Hermitian interior eigenvalue problem, a phenomenon called ghost eigenvalue often appears. The ghost eigenvalue is
the one that appears in the region $\mathcal{D}$ but does not converge. The ghost eigenvalue would make it difficult to examine the convergence of subspace iterations.

One of the practical strategies is to set a tolerance  $\tau_g$ as in~\cite{BFEAST}, which is much larger than the target relative error $\tau$. As the iteration goes, the true eigenvalues will converge to a small relative error, while the ghost eigenvalues will not converge to the same precision. After a few steps, there is a gap in the relative errors between the true eigenvalues and the ghost eigenvalues. When the relative error of an approximate eigenpair $(\tilde{\lambda}_i,\tilde{x}_i)$ inside $\mathcal{D}$ is smaller than $\tau_g$, we treat it as a filtered eigenpair and denote the number of filtered eigenpairs as $p$. When $p$ is not changed and all relative errors of the filtered eigenpairs are smaller than $\tau$, we terminate the algorithm. That corresponds to the 7th and 12th lines of \cref{alg: composite}.

\begin{algorithm}[htbp]
    \caption{Eigensolver: Composite rational function filter}
    \label{alg: composite}
    \begin{algorithmic}[1]
        \REQUIRE Pencil $(A,B)$, center $c$, radius $r$, number of
        eigenvalues $s$, shift $\sigma$, number of poles $[k_1, k_2]$, tolerance $\tau_g$ and $\tau$.

        \ENSURE The approximate eigenpair $(\tilde{\lambda}_i,\tilde{x}_i)$ 
        with $\tilde{\lambda}_i \in \calD$.

        \STATE Compute $\{p_i^{(k_1)}, w_i^{(k_1)}\}_{i=1}^{k_1}$, $\{c_j^{(k_2)},
        s_j^{(k_2)}\}_{j=1}^{k_2}$.

        \FOR {$i = 1, \cdots, k_1$}
            \STATE Pre-factorize $p_i^{(k_1)} B - A$ as $K_i$. (e.g., $K_i=L_iU_i$.)
        \ENDFOR

        \STATE Construct a function for applying $G$ to a set of vectors $V$.
        \begin{equation*}
            G(V) = \sum_{i=1}^{k_1} w_i^{(k_1)} K_i^{-1} B V.~(\mathrm{e.g.},~K_i^{-1}=U_i^{-1}L_i^{-1}.)
        \end{equation*}

        \STATE Generate an orthonormal random matrix $Y^{N \times
        n_{\mathrm{col}}}$ with $n_{\mathrm{col}}\ge s$.

        \WHILE {$p$ changes or any $e_i$ of filtered eigenpair is larger than $\tau$}
            \STATE $\widetilde{Y} = G(Y)$.

            \STATE Solve $U_j = (G-s_j^{(k_2)}I)^{-1}\widetilde{Y}$ for $j = 1,
            \cdots, k_2$ via multi-shift GMRES.

            \STATE $U = \sum_{j=1}^{k_2}c_j^{(k_2)} U_j$.

            \STATE $[Y,\widetilde{\Lambda},\widetilde{X}]=
            \mathrm{HSRR}(A,B,U,\sigma)$.

            \STATE Distinguish ghost eigenvalues and filtered eiegnvalues by $\tau_g$. Update the number of filtered eigenvalues $p$. 
        \ENDWHILE
    \end{algorithmic}
\end{algorithm}

We now estimate the computational cost for \cref{alg: composite}. In the preparation phase before subspace iteration, the weights and poles are computed in the computational cost of $O(1)$. For the pre-factorizations of $k_1$ linear systems, the computational complexity is $k_1 C_{\mathrm{factor}}$ and the memory required is $k_1 C_{\mathrm{apply}}$. In the subspace iteration phase, the per-iteration computational cost is dominated by the multi-shift GMRES. If we denote $n_{\mathrm{iter}}^{(j,t)}$ as the GMRES iteration number for $j$-th
column in the $t$-th subspace iteration, the dominant computational cost in the GMRES is
\begin{equation*}
    \sum_{t = 1}^T \sum_{j = 1}^{n_{\mathrm{col}}}
    n_{\mathrm{iter}}^{(j,t)} \cdot k_1 C_{\mathrm{apply}},
\end{equation*}
where $k_1 C_{\mathrm{apply}}$ is the cost in applying $G(\cdot)$ to a vector. 

The overall dominant computational and memory costs for \cref{alg: composite} are summarized in \cref{tab: compare}. In the same table, we also list the computational and memory costs for subspace iteration with $k_1k_2$-th order rational filter without using the composite rule. Another row of ratio is added to indicate the acceleration from \cref{alg: composite}. Clearly, both the computation and memory costs in the pre-factorization phase are reduced by a factor of $k_2$. While the comparison for the subspace iteration part is less clear. The ratio depends on the iteration numbers of both the subspace iteration and the multi-shift GMRES. Another interesting
thing is that as the subspace iteration goes, the columns of $Y$
become closer aligned with the eigenvectors, which means the Krylov
subspaces will converge faster and so will the GMRES, as we
numerically shown in \cref{sec: decayGMRES}.

\begin{table}[htb]
    \centering
    \caption{Computational and memory complexities of the subspace iteration with the simple rational filter and the composite rational filter. The simple rational filter is of order $k_1 k_2$ and the composite rational filter is of inner and outer order $k_1$ and $k_2$ respectively. Here $C_\mathrm{factor}$ and $C_\mathrm{apply}$ are factorization and solving cost for a matrix of size $N \times N$.}
    \label{tab: compare}
    \begin{tabular}{ccccc}
        \toprule
        \multirow{2}{*}{Algorithm} & \multicolumn{2}{c}{Computation}
        & \multicolumn{2}{c}{Memory} \\
        & Pre-Fact & Iteration & Pre-Fact & Iteration \\
        \toprule
        Simple               &
        $k_1k_2C_{\mathrm{factor}}$ &
        $T n_{\mathrm{col}} k_1 k_2 C_{\mathrm{apply}}$ &
        $k_1k_2C_{\mathrm{apply}}$ & $n_\mathrm{col} N$ \\
        \Cref{alg: composite} &
        $k_1C_{\mathrm{factor}}$ & $\sum\limits_{t=1}^T
        \sum\limits_{j=1}^{n_{\mathrm{col}}}
        n_{\mathrm{iter}}^{(j,t)} k_1C_{\mathrm{apply}}$ &
        $k_1C_{\mathrm{apply}}$ &
        $\max\limits_{t=1}^T \sum\limits_{j=1}^{n_\mathrm{col}}
        n_\mathrm{iter}^{(j,t)} N$\\
        \midrule
        Ratio & $k_2$ & $\frac{T n_{\mathrm{col}} k_2}{\sum\limits_{t=1}^T
        \sum\limits_{j=1}^{n_{\mathrm{col}}} n_{\mathrm{iter}}^{(j,t)}}$ &
        $k_2$ & $\frac{n_\mathrm{col}}{
        \max\limits_{t=1}^T \sum\limits_{j=1}^{n_\mathrm{col}}
        n_\mathrm{iter}^{(j,t)}}$\\
        \bottomrule
    \end{tabular}
\end{table}

\subsection{Composite rule eigensolver without subspace iteration}
\label{subsec: compositewithoutsubspaceiteration}

The \cref{alg: composite} still adopts
the framework of subspace iteration. It implements the same order
trapezoidal quadrature as the simple rule in a different way. We can
take advantage of the composite rule from another point of view, i.e.,
achieving better approximation with the same number of factorizations
as the simple rule. The better the rational approximation, the fewer
subspace iteration is needed. In the limit of very accurate
approximation, only one subspace iteration is enough. Making use of
the shift-invariant property of Krylov subspace, we proposed \cref{alg: composite2}, which achieves better approximation with a fixed number of
factorizations and discards the framework of subspace iteration.

More specifically, the first step of \cref{alg: composite2} for the initial $[k_1,k_2]$ is the same as \cref{alg: composite}, which also constructs the operator $G$ via pre-factorizations and generates Krylov subspaces of different vectors for the computation of $R_{c,r,k}(Y)$. When the eigenpairs do not converge, \cref{alg: composite2} will double $k_2$ and compute new shifts and weights, $s_j^{(2k_2)}$ and $c_j^{(2k_2)}$. Importantly, \cref{alg: composite2} does not regenerate new Krylov subspaces from the approximate eigenvectors. Instead, it computes $R_{c,r,2k}(Y)$ in the existing Krylov subspaces used for $R_{c,r,k}(Y)$ and expands them when it is necessary. The \cref{alg: composite2} keeps enlarging $k_2$ until all the eigenpairs converge. As we will show in \cref{sec: experiment}, the dimension of Krylov subspace is not sensitive to $k_2$ and increases mildly. In addition, we find that the shift $s_j^{(k_2)}$ are parts of the shifts $s_j^{(2k_2)}$ and their weights satisfy $c_j^{(k_2)}/2=c_j^{(2k_2)}$. This means we only need to compute $U_j=(G-s_j^{(2k_2)}I)^{-1}\widetilde{Y}$ for the new shifts, then $R_{c,r,2k}(Y)$ can be computed from $R_{c,r,2k}(Y) = \frac{1}{2} R_{c,r,k}(Y) + \sum_{j=k_2+1}^{2k_2} c_j^{(2k_2)} U_j$. It corresponds to the 9th and 10th lines of \cref{alg: composite2}.

\begin{algorithm}[htb]
    \caption{Eigensolver: Composite rational function filter without
    subspace iteration}
    \label{alg: composite2}
    \begin{algorithmic}[1]
        \REQUIRE Pencil $(A,B)$, center $c$, radius $r$, number of
        eigenvalues $s$, shift $\sigma$, tolerance $\tau_g$ and $\tau$, number of poles $k_1$, initial $k_2$ ($k_2=k_1$ in default), and $\hat{k}_2 = 0$.

        \ENSURE The approximate eigenpair $(\tilde{\lambda}_i,\tilde{x}_i)$ 
        with $\tilde{\lambda}_i \in \calD$.

        \STATE Compute $\{p_i^{(k_1)}, w_i^{(k_1)}\}_{i=1}^{k_1}$, $\{c_j^{(k_2)},
        s_j^{(k_2)}\}_{j=1}^{k_2}$.

        \FOR {$i = 1, \cdots, k_1$}
            \STATE Pre-factorize $p_i^{(k_1)} B - A$ as $K_i$. (e.g., $K_i=L_iU_i$.)
        \ENDFOR

        \STATE Construct a function for applying $G$ to a set of vectors $V$.
        \begin{equation*}
            G(V) = \sum_{i=1}^{k_1} w_i^{(k_1)} K_i^{-1} B V.~(\mathrm{e.g.},~K_i^{-1}=U_i^{-1}L_i^{-1}.)
        \end{equation*}

        \STATE Generate an orthonormal random matrix $Y^{N \times
        n_{\mathrm{col}}}$ with $n_{\mathrm{col}}\ge s$.

        \STATE $\widetilde{Y} = G(Y)$, $U$ be a zero matrix of the same
        size.

        \WHILE {$p$ changes or any $e_i$ of filtered eigenpair is larger than $\tau$}
            \STATE Solve $U_j = (G-s_j^{(k_2)}I)^{-1}\widetilde{Y}$ for $j=\hat{k}_2+1,\cdots,k_2$ via multi-shift GMRES in the existing Krylov subspaces and expand it when necessary.

            \STATE $U = U/2 + \sum_{j=\hat{k}_2+1}^{k_2}c_j^{(k_2)}U_j$.

            \STATE $[Y,\widetilde{\Lambda},\widetilde{X}]=
            \mathrm{HSRR}(A,B,U,\sigma)$. 

            \STATE Distinguish ghost eigenvalues and filtered eiegnvalues by $\tau_g$. Update the number of filtered eigenvalues $p$.

            \STATE $\hat{k}_2=k_2, k_2=2k_2$. Compute $\{c_j^{(k_2)},s_j^{(k_2)}\}_{j=\hat{k}_2+1}^{k_2}$.
        \ENDWHILE
    \end{algorithmic}
\end{algorithm}

Compared to \cref{alg: composite}, \cref{alg: composite2} does not regenerate Krylov subspaces each time and enables adaptive selection of $k_2$, which makes \cref{alg: composite2} more practical. The more interesting characteristic of \cref{alg: composite2} is that it discards the framework of subspace iteration. We find that the idea of reusing Krylov subspace for algorithm design is also shown in~\cite{RIM}, where they use a single Cayley transform for preconditioning. Instead, we use trapezoidal quadrature with $k_1$ poles for preconditioning. That means \cref{alg: composite2} can enjoy the benefit of parallelization, as the solving of the $k_1$ shifted linear systems $(p_i^{(k_1)}B-A)x=y$ can be performed simultaneously.

Another feature of \cref{alg: composite2} is that in the whole process of dynamically increasing $k_2$, we are essentially using the trapezoidal quadrature as a filter. While for \cref{alg: composite} and the simple rule, both of them use powers of the trapezoidal quadrature. As we show in the next section, trapezoidal quadrature is asymptotically optimal, which contributes part of the advantages of \cref{alg: composite2}.
\section{Asymptotically optimal contour discretization}
\label{sec: Optimal}

In this section, we will start with the definition of spectrum separation, which is a continuous version of \cref{eq: ratioeig}. Then we will study the optimal and asymptotically optimal rational filter in the sense of spectrum separation. Two main results will be proved, i.e., the rational function used in the inverse power method is optimal and the trapezoidal quadrature is asymptotically optimal with respect to $k$. The latter one shows that $R_{c,r,k}(z)$ is a reasonable good choice.

\subsection{Spectrum separation}
Among various quadrature rules, the optimality of quadrature needs to be defined based on a criterion. The convergence rate mainly depends on \cref{eq: ratioeig}. Since we do not know eigenvalues in a priori, we could assume that there is an annulus around the boundary of $\calD$ as a generalized eigengap of the inner and outer eigenvalues. The inner part and the outer part are,
\begin{equation*}
    \calI = \{z:|z|\le a\}, \quad \text{and } \calO=\{z:|z|\ge b\},
\end{equation*}
where $a$ and $b$ are the radii of the inner and outer parts of the annulus, $\calI$ contains all the eigenvalues inside $\calD$. Then the criterion is defined as,
\begin{equation} \label{eq: ratio}
    \frakR = \frac{\sup_{z \in \calO} |R_k(z)|}{\inf_{z \in \calI} |R_k(z)|}.
\end{equation}
Hence, we would like to address the following optimization problem to obtain the optimal weights and poles for a given $k$,
\begin{equation} \label{eq: infratio}
    \inf_{\{w_i^{(k)}\}_{i=1}^{k}, \{p_i^{(k)}\}_{i=1}^{k}}
    \frac{\sup_{z \in \calO} |R_k(z)|}{\inf_{z \in \calI} |R_k(z)|}.
\end{equation}
We call \cref{eq: ratio} as spectrum separation. One could see that as $b/a$ becomes larger, it is easier to separate the values inside and outside the annulus with rational functions. The drawback of using a larger $b$ is that more eigenvalues may fall into the annulus $\{z:a\le|z|\le b\}$ and we do not explicitly know the impact of these eigenvalues on the convergence of the subspace iteration.

\subsection{Zolotarev's third problem}
\label{subsec: Zolotarev}

We introduce Zolotarev's third problem with its related theoretical results~\cite{petrushev_popov_1988, STARKE1992115}. Zolotarev's third problem is about the optimal separation of rational function on two disjoint regions. Since contour discretization yields a rational function, it is natural to bridge the contour discretization and Zolotarev's third problem.

\begin{definition} \label{def: Zolo3rd}
    Let $\calE$ and $\calG$ be two disjoint regions of $\bbC$, i.e., $\calE \cap \calG =\emptyset$. The Zolotarev's third problem is to solve the following optimal problem
    \begin{equation}\label{eq: Zolo3rd}
        Z_k(\calE, \calG) = \inf_{r \in \calR_{k,k}} \frac{\sup_{z \in \calE}
        |r(z)|}{\inf_{z \in \calG} |r(z)|}.
    \end{equation}
\end{definition} 

\begin{figure}[htp]
    \centering
    \includegraphics[scale=0.7]{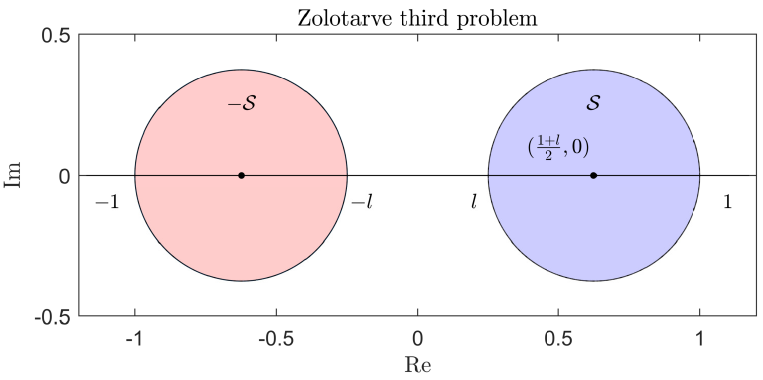}
    \caption{Regions in Zolotarev's third problem when $E$ and $G$ are
    symmetric disks.}
    \label{fig: zolo3rd}
\end{figure}

Zolotarev's third problem tends to find a rational function whose value on $\calE$ is least from zero and the value on $\calG$ is most away from zero. When $\calE$ and $\calG$ are two symmetric disks with respect to the origin as in \cref{fig: zolo3rd}, the solution to Zolotarev's third problem is explicitly given in \cref{thm:explicitzolo3}. \Cref{thm:explicitzolo3} shown in this paper takes a different parameterized form of that in~\cite{STARKE1992115}.

\begin{theorem} \label{thm:explicitzolo3}
    Let $\calS = \{z \in \bbC: |z - \frac{1 + \ell}{2}| \le \frac{1 -
    \ell}{2}\}, 0 < \ell < 1$. Then the rational function
    \begin{equation*}
        R_k^{(Z)}(z) = \bigg(\frac{z - \sqrt{\ell\,}}{z + \sqrt{\ell\,}}
        \bigg)^k,
    \end{equation*} 
    attains the infimum of the Zolotarev's third
    problem $Z_k(\calS,-\calS)$ and the infimum equals to $(\frac{1 +
    \sqrt{\ell\,}}{1 - \sqrt{\ell\,}})^{-2k}$.
\end{theorem}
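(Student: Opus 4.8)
The plan is to transplant the two-disk Zolotarev third problem onto a canonical configuration via a M\"obius map, solve it there, and pull the extremizer back. First I would introduce $\phi(z)=\frac{z-\sqrt{\ell}}{z+\sqrt{\ell}}$. Since $\phi$ has degree one, $r\mapsto r\circ\phi$ is a bijection of $\calR_{k,k}$ onto itself with $\sup_{z\in S}|r(\phi(z))|=\sup_{w\in\phi(S)}|r(w)|$ (and likewise for the infimum over $-S$), so $Z_k(S,-S)=Z_k(\phi(S),\phi(-S))$ and $r$ is extremal for the right-hand side iff $r\circ\phi$ is extremal for the left. Next I would identify the image regions: $\phi$ has real coefficients, hence fixes $\bbR\cup\{\infty\}$ and commutes with conjugation; its only pole $-\sqrt{\ell}$ lies in $(-1,-\ell)$, hence interior to $-S$, while $\sqrt{\ell}\in(\ell,1)$ is interior to $S$; and a short computation gives $\phi(\ell)=-\mu$, $\phi(1)=\mu$, $\phi(-\ell)=-1/\mu$, $\phi(-1)=1/\mu$ with $\mu=\frac{1-\sqrt{\ell}}{1+\sqrt{\ell}}\in(0,1)$. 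Since $\partial S$ is a circle symmetric about $\bbR$ that misses the pole of $\phi$, its image is a circle symmetric about $\bbR$ through $\pm\mu$, namely $\{|w|=\mu\}$; as $\phi(\sqrt{\ell})=0$ lies inside, $\phi(S)=\{|w|\le\mu\}$, and the same argument with the pole interior to $-S$ (so $\infty\in\phi(-S)$) gives $\phi(-S)=\{|w|\ge 1/\mu\}\cup\{\infty\}$. Finally $r_k^{(Z)}=(\,\cdot\,)^k\circ\phi$ and $\mu^{2k}=\bigl(\tfrac{1+\sqrt{\ell}}{1-\sqrt{\ell}}\bigr)^{-2k}$, so it remains to show that $w^k$ solves $Z_k(\{|w|\le\mu\},\{|w|\ge 1/\mu\})$ with value $\mu^{2k}$.

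For this canonical problem the upper bound is immediate: $r(w)=w^k$ has $\sup_{|w|\le\mu}|w^k|=\mu^k$ and $\inf_{|w|\ge 1/\mu}|w^k|=\mu^{-k}$, so its ratio is $\mu^{2k}$ (which is also how one arrives at the claimed extremizer). For the matching lower bound I would argue as follows. Given a competitor $r\in\calR_{k,k}$, a pole in $\{|w|\le\mu\}$ makes the numerator $+\infty$ and a zero in $\{|w|\ge 1/\mu\}\cup\{\infty\}$ makes the denominator $0$, so I may assume neither occurs; then the maximum modulus principle (for $r$ and for $1/r$) puts the supremum and infimum on the circles $|w|=\mu$ and $|w|=1/\mu$. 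Writing the zeros of $r$ as $a_1,\dots,a_{p_0}$ ($p_0\le k$, $|a_j|<1/\mu$) and the finite poles as $b_1,\dots,b_{q_0}$ ($q_0\le k$, $|b_j|>\mu$), the mean-value identity $\frac{1}{2\pi}\int_0^{2\pi}\log|w-a|\dtheta=\max(\log\rho,\log|a|)$ on $|w|=\rho$ gives
\begin{equation*}
  \frac{1}{2\pi}\int_0^{2\pi}\log\bigl|r(\rho e^{\imath\theta})\bigr|\dtheta
  = c_r + \sum_{j=1}^{p_0}\max(\log\rho,\log|a_j|) - \sum_{j=1}^{q_0}\max(\log\rho,\log|b_j|),
\end{equation*}
with $c_r$ independent of $\rho$. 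Subtracting this identity at $\rho=1/\mu$ from its value at $\rho=\mu$, each zero contributes $\log\bigl(\mu\max(|a_j|,\mu)\bigr)\ge 2\log\mu$ and each pole contributes $\log\bigl(\max(|b_j|,1/\mu)/|b_j|\bigr)\ge 0$; since $p_0\le k$ and $\log\mu<0$, the two circle averages differ by at least $2k\log\mu$. Hence $\max_{|w|=\mu}|r|\ge\mu^{2k}\min_{|w|=1/\mu}|r|$, i.e.\ $\sup_{|w|\le\mu}|r|\ge\mu^{2k}\inf_{|w|\ge 1/\mu}|r|$; tracking the equality case forces $p_0=k$ with all zeros in $\{|w|\le\mu\}$ and all poles in $\{|w|\ge 1/\mu\}$, recovering $w^k$ up to a scalar.

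The hard part is the first step: correctly pinning down which side of each image circle the regions land on, and in particular checking that $-\sqrt{\ell}$ is interior to $-S$ so that $\phi(-S)$ is the exterior of a disk rather than a disk; the degenerate bookkeeping in the lower bound (zeros, or a pole, at $\infty$; unequal numerator and denominator degrees) is routine. An alternative to the self-contained lower bound is to quote the explicit solution of the symmetric two-disk Zolotarev third problem from \cite{STARKE1992115} (cf.\ \cite{petrushev_popov_1988}) and use only the M\"obius reduction above to rewrite it in the stated parameterization.
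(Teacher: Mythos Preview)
Your argument is correct. The M\"obius reduction is clean and the images $\phi(S)=\{|w|\le\mu\}$, $\phi(-S)=\{|w|\ge 1/\mu\}$ are identified correctly; the Jensen-type averaging argument for the lower bound on the annulus problem is valid and yields exactly $\mu^{2k}$. One small overreach: your equality-case remark (``recovering $w^k$ up to a scalar'') does not quite follow from the listed constraints alone, since constant modulus on two concentric circles together with the zero/pole locations still admits, e.g., Blaschke-type competitors; but the theorem only asserts that $r_k^{(Z)}$ attains the infimum and what that infimum is, so this does not affect the result.

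As for comparison with the paper: the paper does not prove \cref{thm:explicitzolo3} at all. It is stated as a known result, attributed to \cite{STARKE1992115} in a different parameterization, and then used as a black box in the proof of \cref{thm:Zolotarev1}. Your proposal therefore goes well beyond the paper by supplying a self-contained proof; you already anticipate this in your final sentence, where you note that one could alternatively just cite \cite{STARKE1992115} and use the M\"obius reduction to match parameterizations. Interestingly, the paper's proof of \cref{thm:Zolotarev1} performs essentially the inverse of your reduction---mapping the concentric-annulus problem $Z_k(O,I)$ back to the two-disk configuration $Z_k(S,-S)$ via a M\"obius transform---so your concentric-circle lemma is in fact the more natural primitive here, and proving it directly (as you do) would let one bypass \cref{thm:explicitzolo3} entirely in the derivation of \cref{thm:Zolotarev1}.
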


The explicit solution to Zolotarev's third problem as in \cref{thm:explicitzolo3} is the key to proving the asymptotical optimality in the sense of sepctrm separation with respect to $k$ of the trapezoidal quadrature. The rational function in \cref{thm:explicitzolo3} is referred to as Zolotarev's third function in the rest of this paper.

\subsection{Compact form for $R_{0,1,k}(z)$}
\label{subsec:compactform}

In order to connect the Zolotarev's function and the trapezoidal quadrature of the contour integral, and derive the composite formula in \cref{sec: composite}, we establish an identity that relates $R_{0,1,k}(z)$ and $R_{0,1,1}(z^k)$. The relation heavily relies on the symmetry of the trapezoidal quadrature on the circle.

Let us start with toy cases $k = 2, 4$. The trapezoidal quadrature of the unit circular contour with two poles could be rewritten as
\begin{equation*}
    R_{0,1,2}(z) = \frac{1}{2}\bigg( \frac{e^{\frac{\imath \pi}{2}}}{e^{
    \frac{\imath \pi}{2}} - z} + \frac{e^{\frac{3 \imath \pi}{2}}}{e^{
    \frac{3 \imath \pi}{2}} - z}\bigg)
    = \frac{1}{2} \frac{2 e^{\imath \pi }}{e^{\imath \pi} - z^2}
    = \frac{1}{1 + z^2} = R_{0,1,1}(z^2).
\end{equation*}
Here we use the symmetry of poles and weights with respect to the origin to derive the compact form of $R_{0,1,2}(z)$ and find that $R_{0,1,2}(z)$ is equivalent to $R_{0,1,1}(z^2)$. Let us further derive the compact form of $R_{0,1,4}(z)$,
\begin{equation*}
    \begin{split}
        R_{0,1,4}(z) & = \frac{1}{4} \bigg( \frac{e^{\frac{\imath \pi}{4}}}{e^{
        \frac{\imath \pi}{4}} - z} + \frac{e^{\frac{7\imath \pi}{4}}}{e^{
        \frac{7\imath \pi}{4}} - z} + \frac{e^{\frac{3\imath \pi}{4}}}{e^{
        \frac{3\imath \pi}{4}} - z} + \frac{e^{\frac{5\imath \pi}{4}}}{e^{
        \frac{5\imath \pi}{4}} - z} \bigg)\\
        & = \frac{1}{2} \bigg( \frac{e^{\frac{\imath \pi}{2}}}{e^{
        \frac{\imath \pi}{2}} - z^2} + \frac{e^{\frac{3\imath \pi}{2}}}{e^{
        \frac{3\imath \pi}{2}} - z^2} \bigg)
        = R_{0,1,2}(z^2) = R_{0,1,1}(z^4),
    \end{split}
\end{equation*}
where, in the second equality, we combine the first two and last two terms, and in the last equality, we adopt the compact form of $R_{0,1,2}(z)$. From the derivation of the compact forms of $R_{0,1,2}(z)$ and $R_{0,1,4}(z)$, we could directly extend the derivation to obtain the compact form of $R_{0,1,k}(z) = R_{0,1,1}(z^k)$ for $k = 2^m$, $m\in\bbN_+$. Fortunately, the compact form holds for any $k \in \bbN_+$. The result is summarized in
\cref{lem: compact}.
\begin{lemma} \label{lem: compact}
    For all $k \in \bbN_+$, let $k$ roots of $z^k = -1$ be
    $\sigma^{(k)}_i$ for $i = 1, \dots, k$. Then the compact form of
    $R_{0,1,k}(z)$ admits,
    \begin{equation} \label{eq: trapezoidalquad}
        R_{0,1,k}(z) = \frac{1}{k} \sum_{i = 1}^k \frac{\sigma^{(k)}_i}
        {\sigma^{(k)}_i - z} = \frac{1}{1 + z^k} = R_{0,1,1}(z^k).
    \end{equation} 
\end{lemma}

\begin{proof}

We first prove two equalities, \cref{eq: rootspoly} and \cref{eq: unitequal}, and then derive the compact form of $R_{0,1,k}(z)$.

The $k$ roots of the $k$-th degree polynomial $z^k + 1$ are denoted as $\sigma_i^{(k)}$ for $i = 1, 2, \dots, k$. A $k$-th order polynomial with $k$ roots takes form, $a_k \prod_{i = 1}^k (z - \sigma_i^{(k)})$, where $a_k$ is the coefficient in the leading order. Comparing with the leading order coefficient in $z^k + 1$, we know $a_k = 1$ and have,
\begin{equation} \label{eq: rootspoly}
    z^k + 1 = \prod_{i = 1}^k (z - \sigma_i^{(k)}).
\end{equation}

Then we prove the second equality,
\begin{equation} \label{eq: unitequal}
    -\frac{1}{k} \sum_{i=1}^k \sigma_i^{(k)} \prod_{j = 1, j \not= i}^k (z
    - \sigma_j^{(k)}) = 1.
\end{equation}
The left-hand side of \cref{eq: unitequal} is a $(k - 1)$-th degree polynomial. For equality \cref{eq: unitequal} to hold, we only need to make sure that the equality holds on $k$ different points.Specifically, we examine that on $\sigma_i^{(k)}$ for $i = 1, \dots, k$ and obtain, 
\begin{equation*}
    -\frac{\sigma_i^{(k)}}{k} \prod_{j = 1, j \not= i}^k(\sigma_i^{(k)}
    - \sigma_j^{(k)}) = - \frac{\sigma_i^{(k)}}{k} \lim_{z \to \sigma_i^{(k)}} \frac{z^k
    + 1}{z - \sigma_i^{(k)}} = - \frac{\sigma_i^{(k)}}{k}
    \frac{k (\sigma_i^{(k)})^{k - 1}}{1} = 1,
\end{equation*}
where the first equality is due to \cref{eq: rootspoly} and the continuity of $(z^k + 1) / (z - \sigma_i^{(k)})$, the second equality comes from the L'Hopital rule of complex functions, and the last equality holds since $\sigma_i^{(k)}$ is a root of $z^k + 1$.

Finally, we derive the compact form of $R_k(z)$ as in \cref{lem: compact}.
\begin{equation*}
    \begin{split}
    R_{0,1,k}(z) & = \frac{1}{k} \sum_{i = 1}^k \frac{\sigma_i^{(k)}}{\sigma_i^{(k)} - z}
    = \frac{-\frac{1}{k} \sum_{i = 1}^k \sigma_i^{(k)} \prod_{j = 1, j
    \not= i}^k(z - \sigma_j^{(k)})}{\prod_{i = 1}^k(z - \sigma_i^{(k)})}\\
    & = \frac{1}{\prod_{i = 1}^k (z - \sigma_i^{(k)})} = \frac{1}{z^k + 1}
    = R_{0,1,1}(z^k),
    \end{split}
\end{equation*}
where the second equality adopts \cref{eq: unitequal} and the fourth equality adopts \cref{eq: rootspoly}.
\end{proof}

A related compact form without detailed derivation could be found in~\cite{IKEGAMI20101927}. The compact form \cref{lem: compact} could be further generalized to $R_{c,r,k}(z)$ and results the compact form,
\begin{equation*}
    R_{c,r,k}(z) = \frac{1}{1 + (\frac{z - c}{r})^k}.
\end{equation*}

\subsection{Optimal solution and the asymptotic optimality of trapezoidal quadrature}
\label{subsec: Optimalthm}

In this section, we prove that, if we know the desired spectrum explicitly, the rational function used in the inverse power method achieves the optimal of \cref{eq: Zolo3rd} for $\calE = \calO$ and $\calG = \calI$. On the other hand, the rational function $R_{c,r,k}(z)$ from the trapezoidal quadrature discretization of the contour integral achieves asymptotic optimality of \cref{eq: Zolo3rd}.

\begin{theorem} \label{thm: Zolotarev1}
    The rational function $z^{-k}$ achieves the infimum of~\cref{eq: Zolo3rd} for $\calE=\calO$ and $\calG=\calI$. And the infimum equals to $\big(\frac{a}{b}\big)^k$.
\end{theorem}

\begin{proof}

We address Zolotarev's third problem with region $\calI$ and $\calO$, i.e., $Z_k(\calO,\calI)$. Define a M\"obius transform $M(z) = \gamma \frac{z - \alpha}{z - \beta}$ such that
\begin{equation*}
    M(-b) = 1, \quad M(-a) = -1, \quad M(a) = -\ell, \quad M(b) = \ell.
\end{equation*}
The parameters $\gamma$, $\alpha$, $\beta$, and $\ell$ are determined by $a$ and $b$. They satisfy
\begin{equation*}
    \alpha = \sqrt{ab}, \quad \beta = -\sqrt{ab}, \quad
    \gamma = \frac{\sqrt{b} - \sqrt{a}}{\sqrt{b} + \sqrt{a}}, \quad
    \ell = \bigg( \frac{\sqrt{b} - \sqrt{a}}{\sqrt{b} + \sqrt{a}} \bigg)^2.
\end{equation*}
It can be verified that $M(\calI) = -\calS$ and $M(\calO) = \calS$ for $\calS$ in \cref{thm:explicitzolo3}. Then the composition of the M\"obius transform and  Zolotarev's function $R_k^{(Z)}(M(z))$ achieves the infimum of $Z_k(\calO, \calI)$ and is denoted as, 
\begin{equation}\label{eq: rkaz}
    R_k^{(A)}(z) = R_k^{(Z)}(M(z)) = z^{-k}.
\end{equation}
The infimum of $\calI$ is taken when $|z|=a$ and the supremum of $\calO$ is taken when $|z|=b$. Then the infimum of the ratio is $\big(\frac{a}{b}\big)^k$.
\end{proof}

\Cref{thm: Zolotarev1} gives the optimal rational function in solving \cref{eq: Zolo3rd}. The rational function $z^{-k}$ therein combined with subspace iteration corresponds to the well-known inverse power method. Further, the radius of $\calD$ or the diameter of the annulus is not included in the optimal rational function. Hence, we conclude that, in the sense of convergence rate of subspace iteration, the optimal interior eigensolver is the inverse power method, assuming the center of the desired region $\calD$ is explicitly known.

While the optimal rational function $z^{-k}$ only has one pole and could not be written as a sum of low-order rational functions form~\cref{eq: discretizedCI}. The inverse power method then has to be executed sequentially and could not benefit from the parallelization of distinct poles. In the following, we argue that, although the trapezoidal quadrature of contour integral is not the optimal rational function, it achieves asymptotic optimality. 

We consider that the contour is the boundary of $\calI$. By \cref{lem: compact}, the discretization can be rewritten as 
\begin{equation*}
    R_{0,a,k}(z) = \frac{1}{1 + (\frac{z}{a})^k}.
\end{equation*} 
By the maximum modulus principle, the infimum of $\calI$ and the supremum of $\calO$ are taken when $|z|=a$ and $|z|=b$. In region $\calI$, $|\frac{z}{a}|^k\le 1$. The absolute value of the denominator can be viewed as the distance between $-1$ and $(\frac{z}{a})^k$. By simple computation, the infimum is achieved when $z=a$. Similarly, the supremum of $\calO$ is achieved when $z=\sqrt[k]{-1}b$ from the fact that $|\frac{z}{a}|^k>1$ in $\calO$. The spectrum separation \cref{eq: ratio} is 
\begin{equation*}
    \frakR = \frac{2}{(\frac{b}{a})^k-1} \sim 2 \left( \frac{a}{b} \right)^k,
\end{equation*}
which asymptotically decays with respect to $k$ at the same rate as that in \cref{thm: Zolotarev1}. The above discussion is summarized in the following corollary.

\begin{corollary}
    The trapezoidal quadrature discretization of the contour integral on the boundary of $\calG = \calI$ results in the rational function 
    \begin{equation}
        1\bigg/\bigg[1 + \big(\frac{z}{a}\big)^k\bigg].
    \end{equation} 
    The spectrum separation \cref{eq: ratio} of the trapezoidal quadrature is $2/[(\frac{b}{a})^k-1]$, which achieves the same decay rate as the infimum of~\cref{eq: Zolo3rd} for $\calE=\calO$ and $\calG=\calI$.
\end{corollary}

Although the trapezoidal quadrature used to approximate the contour integral is not the optimal rational function of \cref{eq: Zolo3rd}, the ratio asymptotically achieves the optimal one up to a constant prefactor 2. Hence, we call $R_{c,r,k}(z)$ as the nearly optimal rational function for \cref{eq: Zolo3rd}. The
advantage of the trapezoidal quadrature over $z^{-k}$ is that
$R_{c,r,k}(z)$ could be efficiently parallelized. Specifically, the
solving of the $k$ shifted linear systems $(p_i^{(k)} B - A)x = y$ could be
performed simultaneously, while the solving of $z^{-k}$ has to be
performed subsequently.

\section{Composite rule of trapezoidal quadrature} \label{sec: composite}

In this section, a composite rule of trapezoidal quadrature is derived. Let us start from $c = 0$ and $r = 1$. We repurpose $R_k(z)$ as $R_{0,1,k}(z)$ in the rest of this paper.

Given a positive integer $k$ and its integer factorization $k = k_1 k_2$ for
$k_1 > 1$ and $k_2 > 1$, we aim to rewrite the $k$-th order rational function $R_k(z)$ as a composition of two $k_1$-th and $k_2$-th rational functions, $R_{k_1}(z)$ and $\hat{R}_{k_2}(z) = R_{k_2}(M(z))$, where $M(\cdot)$ is a M\"{o}bius transform function. Precisely, the composite function admits, $R_{k}(z) = \hat{R}_{k_2}(R_{k_1}(z)) = R_{k_2}(M(R_{k_1}(z))$.

According to
\cref{lem: compact}, we have a natural composite expression as,
\begin{equation*}
    R_{k_1k_2}(z) = R_1(z^{k_1 k_2}) = R_{k_2}(z^{k_1}).
\end{equation*}
For the desired composite rule holds, we should let $M(R_{k_1}(z)) = z^{k_1}$. Now we determine the coefficients of $M(z)=(a z - b)/(c z - d)$ such that
$M(R_{k_1}(z)) = z^{k_1}$ holds. For $|z|\not=\infty$, substituting $R_{k_1}(z) = 1/(1 +
z^{k_1})$ into the expression of $M(z)$, we obtain,
\begin{equation*}
    \begin{split}
        & M(R_{k_1}(z)) = \frac{a - b(1 + z^{k_1})}{c - d(1 + z^{k_1})}
        = z^{k_1} \\
        \Longleftrightarrow \quad &
        d z^{2k_1} + (d-c-b) z^{k_1} + (a - b) = 0.
    \end{split}
\end{equation*}
The above equality holds for all $z$. Hence we have solutions of coefficients satisfying $d = 0$ and $a = b = -c$. These solutions of coefficients lead to the unique M{\"o}bius transform function,
\begin{equation} \label{eq: mobiustrans}
    M(z) = \frac{1-z}{z}.
\end{equation}
One can verify that $M(R_{k_1}(z)) = z^{k_1}$ holds for $|z|=\infty$. In \cref{fig: mobius}, the mapping of $R_{k_1}(z)$ and $M(R_{k_1}(z))$ are illustrated.

\begin{figure}[htbp]
    \centering
    \begin{minipage}{0.48\linewidth}
        \includegraphics[scale=0.4]{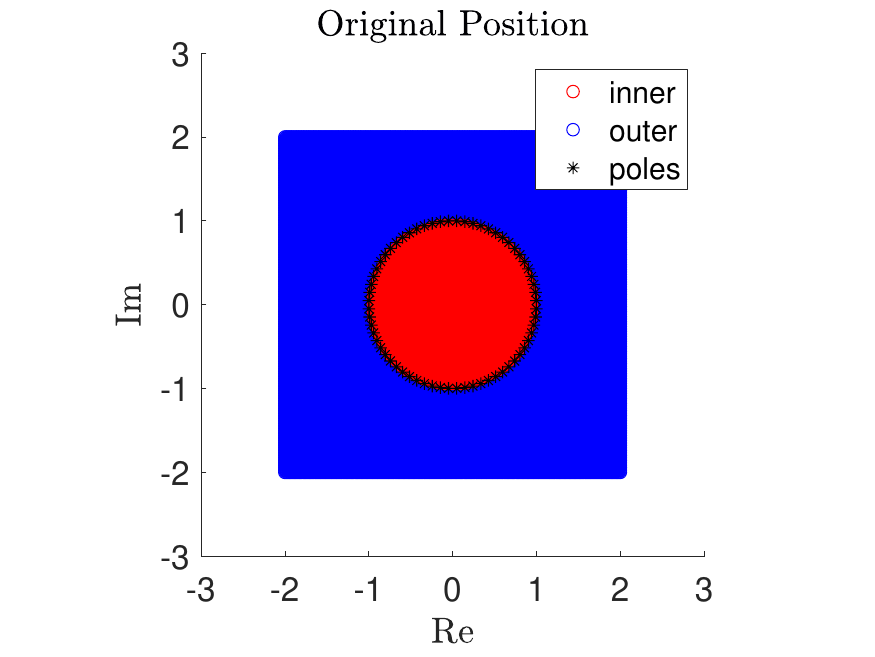}
    \end{minipage}
    \begin{minipage}{0.48\linewidth}
        \includegraphics[scale=0.4]{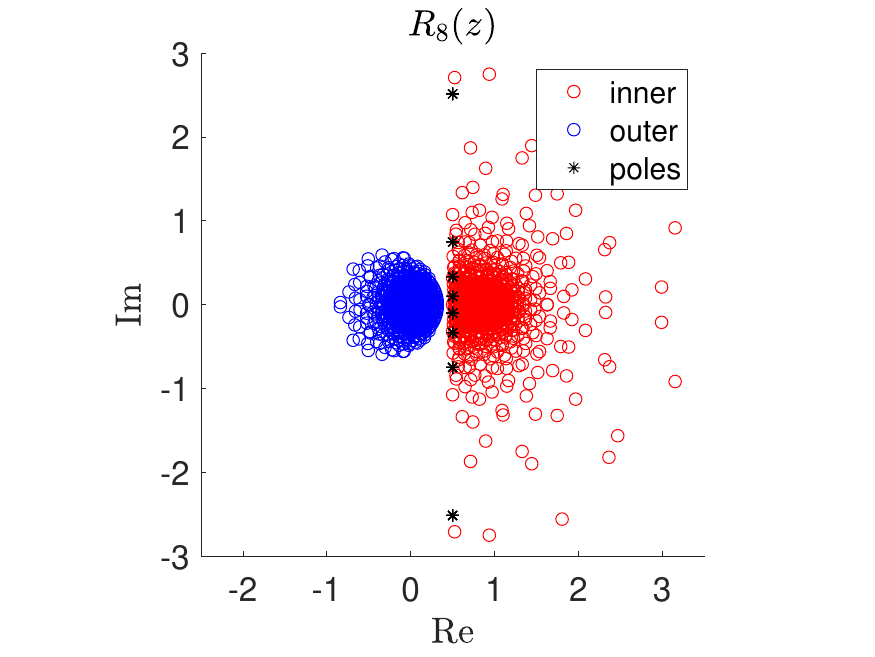}
    \end{minipage}
    \hfill
    \begin{minipage}{0.48\linewidth}
        \includegraphics[scale=0.4]{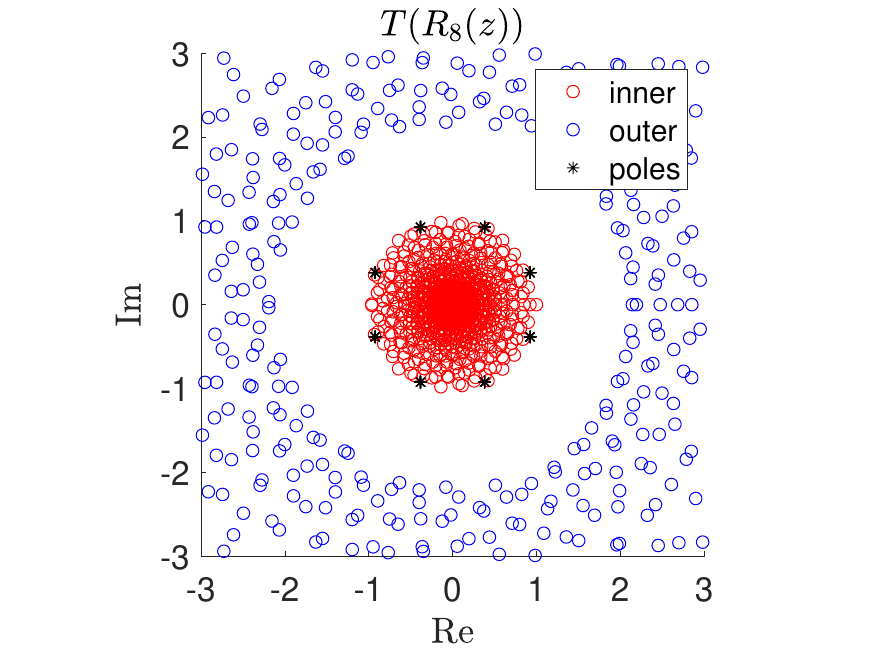}
    \end{minipage}
    \begin{minipage}{0.48\linewidth}
        \includegraphics[scale=0.4]{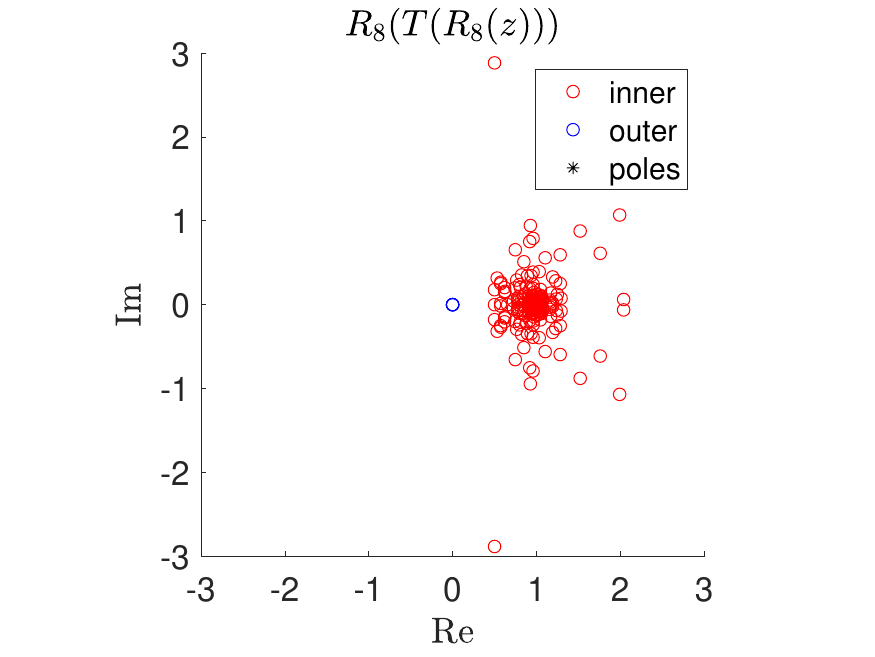}
    \end{minipage}
    \caption{We plot the mapping on $[-2, 2] + [-2, 2] * \imath$. There are $201$ equally spaced points in the direction of the real part and the imaginary part, $40401$ points in total. The outer points are those $|z|>h=1.1$ and the inner points are those $|z|\le 1$ where the contour is $|z|=1$. We fix the figure window at $[-3, 3]+ [-3, 3]*\imath$ except for top right figure which is shown at $[-2.5, 3.5]+ [-3, 3]*\imath$. We let $k_1=k_2=8$ and the poles in all figures are the poles of $R_{k_1k_2}(z)$. The original eigengap is almost invisible, see the top left figure. From the top right figure, $R_8(z)$ maps the inner part to be close to 1 while the outer part to be close to 0 and the poles are mapped to the line $\mathrm{Real}(z)=0.5$. A more clear comparison of pre and post-mapping eigengaps are shown as the difference between the top left figure and bottom left figure. The composite mapping successfully maps the outer part close to 0 and the inner part close to 1 or modulus greater than 1, see bottom right figure.}
    \label{fig: mobius}
\end{figure}

Throughout the above derivation, we conclude that $R_{k_1k_2}(z) = R_{k_2}(M(R_{k_1}(z)))$. A generalized composite rule is given in \cref{thm: Rcrm} for $\Gamma$ with center $c$ and radius $r$. In \cref{thm: Rcrm}, we compose $R_{0,1,k_2}(\cdot)$ and $M(\cdot)$ together and rewrite it as the sum of first-order rational functions. Such a summation form could later be used directly in the algorithm design.
\begin{theorem} \label{thm: Rcrm}
    Given a positive integer $k$ and its integer factorization $k = k_1
    k_2$, the rational function $R_{c,r,k}(z)$ admits the following
    composite rule,
    \begin{equation*}
            R_{c,r,k}(z) = R_{0,1,k_2}(M ( R_{c,r,k_1}(z)) ),
    \end{equation*}
    where $M(\cdot)$ is the M{\"o}bius transform \cref{eq: mobiustrans}.
    When $k_2$ is even, the rational function $R_{c,r,k}(z)$ further
    admits the summation form,
    \begin{equation} \label{eq: Rcrm}
        \begin{aligned}
            &R_{c,r,k}(z) =
            \sum_{i=1}^{k_2} c_i^{(k_2)}\big(R_{c,r,k_1}(z) - s_i^{(k_2)}\big)^{-1}R_{c,r,k_1}(z),\\
            &c_i^{(k_2)} = -\frac{1}{k_2}\frac{\sigma_i^{(k_2)}}{1+\sigma_i^{(k_2)}},~s_i^{(k_2)} = \frac{1}{1+\sigma^{(k_2)}_i},
        \end{aligned}
    \end{equation}
    where $\{\sigma_i^{(k_2)}\}_{i=1}^{k_2}$ are roots of $x^{k_2}=-1$. When $k_2$ is odd, 
    \begin{equation} \label{eq: Rcrmodd}
        R_{c,r,k}(z) =
        \sum_{i=1}^{k_2-1} c_i^{(k_2)}\big(R_{c,r,k_1}(z) - s_i^{(k_2)}\big)^{-1}
        R_{c,r,k_1}(z) + \frac{1}{k_2}R_{c,r,k_1}(z),
    \end{equation}
    where $\sigma^{(k_2)}_{k_2}=-1$.
\end{theorem}

\begin{proof}

    We can use the equation $z = ry + c$ to transfer the contour discretization on an arbitrary circle into the case of the unit circle around the origin. The rational function then admits,
    \begin{equation}
        R_{c,r,k}(z)=R_{0,1,k}(y).
    \end{equation}
    Combining with $R_{0,1,k_1k_2}(z)=R_{0,1,k_2}(M(R_{c,r,k_1}(z)))$, we have
    \begin{equation}
        R_{c,r,k}(z) = R_{0,1,k_1k_2}(y) = R_{0,1,k_2}(M(R_{0,1,k_1}(y)))
        = R_{0,1,k_2}(M(R_{c,r,k_1}(z))).
    \end{equation}
    Now we turn to prove the summation form. When $k_2$ is even,  $\sigma_i^{(k_2)} \neq -1$ holds. With \cref{lem: compact}, the summation form is, 
    \begin{align}
        \begin{split}
            R_{c,r,k_1k_2}(z)  =& R_{0,1,k_2}(M(R_{c,r,k_1}(y)))
            = \frac{1}{k_2} \sum_{i=1}^{k_2}
            \frac{\sigma_i^{(k_2)}}{\sigma_i^{(k_2)} - \frac{1 - 
            R_{c,r,k_1}(y)}{R_{c,r,k_1}(y)}}\\
            = & \frac{1}{k_2} \sum_{i = 1}^{k_2}\frac{\sigma_i^{(k_2)}
            R_{c,r,k_1}(y)}{(1+\sigma_i^{(k_2)})R_{c,r,k_1}(y)-1}\\
            = & \frac{1}{k_2} \sum_{i = 1}^{k_2}
            \frac{\sigma_i^{(k_2)}}{1+\sigma_i^{(k_2)}}
            (R_{c, r, k_1}(z) - \frac{1}{1 + \sigma_i^{(k_2)}})^{-1}
            R_{c, r, k_1}(x)\\
            = & \sum_{i = 1}^{k_2} c_i^{(k_2)}(s_i^{(k_2)} - R_{c, r, k_1}(z))^{-1}
            R_{c, r, k_1}(z), \\
        \end{split}
    \end{align}
    where
    \begin{equation}
        c_i^{(k_2)} = -\frac{1}{k_2} \frac{\sigma_i^{(k_2)}}{1+\sigma_i^{(k_2)}},
        \quad s_i^{(k_2)} = \frac{1}{1+\sigma_i^{(k_2)}}.
    \end{equation}
    When $k_2$ is odd, the term associated with $\sigma_{k_2}^{(k_2)}=-1$ in summation form is equal to $\frac{1}{k_1}R_{c,r,k_1}(z)$.
\end{proof}

The poles of the rational function $R_{c,r,k}(z)$ are transferred into the poles of $R_{0,1,k_2}(M(z))$ by the inner operator $R_{c,r,k_1}(z)$. It is detailed in \cref{prop: poles}.
\begin{proposition} \label{prop: poles}
    For any $p_i^{(k)}$ being a pole of $R_{c,r,k}(z)$, there exist 
    $s_j^{(k_2)}$ for $1\le j\le k_2$, such that
    \begin{equation}
        R_{c,r,k_1}(p_i^{(k)})=s_j^{(k_2)},
    \end{equation}
    where $s_{k_2}^{(k_2)}$ could be infinite when $k_2$ is odd.
\end{proposition}

\begin{proof}
    By \cref{lem: compact}, we know
    \begin{equation}
        R_{c,r,k_1}(p_i^{(k)}) = R_{0,1,k_1}(\sigma_i^{(k)})
        = \frac{1}{1+(\sigma_i^{(k)})^{k_1}} = \frac{1}{1+\sigma_j^{(k_2)}}
        = s_j^{(k_2)}.
    \end{equation} 
\end{proof}

\section{Numerical experiment}\label{sec: experiment}

In this section, we will demonstrate the efficiency and stability of the two algorithms through three experiments. The first experiment shows the advantage of the trapezoidal quadrature over another quadrature, Gauss quadrature. The latter two experiments show the computational benefit of applying \cref{alg: composite} and \cref{alg: composite2}. This paper focuses on filter design rather than proposing a novel projection technique. Hence the projection techniques used in \cref{alg: composite}, \cref{alg: composite2}, and simple rule remain identical. Since the estimation of the number of eigenvalues is beyond the scope of this paper, we assume $s$ is known and set the number of columns $n_{\mathrm{col}}>s$ in all numerical experiments. 

In our experiments, we set the parameters for convergence criterion that have been discussed in \hyperref[subsec: eigensolver]{section~\ref*{subsec: eigensolver}} to $\tau_g = 10^{-2}$ and $\tau = 10^{-8}$. The former one serves as the tolerance for distinguishing the ghost eigenvalues, while the latter one is the target precision of eigenpairs. The $\sigma$ in HSRR is set as $c$, which is the center of the circular contour $\Gamma$.

The direct solver is the \texttt{lu} function in MATLAB with four outputs under the default setting, which leads to a sparse LU factorization for sparse matrices. The triangular solves are performed by  ``$\backslash$'' in Matlab, which can handle multiple right-hand sides simultaneously. All programs are implemented and executed with MATLAB R2022b and are performed on a server with Intel(R) Xeon(R) Gold 6226R CPU at 2.90 GHz and 1 TB memory. In performance experiments, we report the single-thread wall time.

\subsection{Asymptotically optimal rational filter}

Firstly, we show the spectrum separation \cref{eq: ratio} for the trapezoidal quadrature, Gauss quadrature and the optimal one in \cref{thm: Zolotarev1}. The numerical results are illustrated in \cref{fig: TRvsGS}. Here, we set $a = 1$ and $b = 1.1$. The infimum of $\calI$ and supremum of $\calO$ for Gauss quadrature is not known as a closed form, so we use the discretization of
1000 points in both directions of real and imaginary part on $[-1.5, 1.5] + [-1.5, 1.5] \cdot \imath$ to estimate \cref{eq: ratio} of Gauss quadrature. Only even $k$ is adopted as we perform the Gauss quadrature on the upper semicircle and lower semicircle separately. Such a Gauss quadrature discretization that preserves the symmetry will perform better than the one that performs gauss quadrature on the whole circle directly.  

\begin{figure}
    \centering
    \includegraphics[scale=0.4]{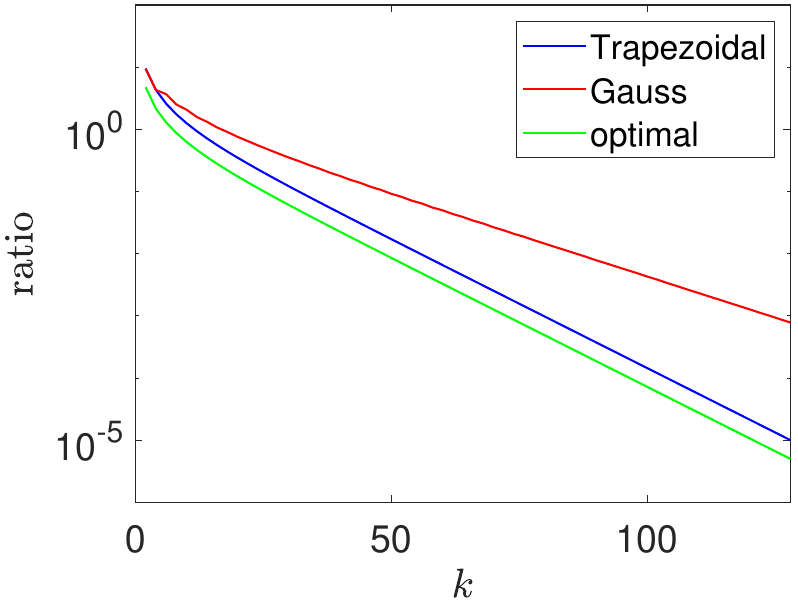}
    \caption{The separation ratio \cref{eq: ratio} for various quadrature rules and numbers of poles. The number of poles $k$ ranges from 2 to 128. The trapezoidal quadrature shows the same slope as the optimal ratio, while the Gauss quadrature behaves differently.}
    \label{fig: TRvsGS}
\end{figure}

From \cref{fig: TRvsGS}, we find that trapezoidal quadrature always outperforms Gauss quadrature. The figure also shows that the trapezoidal quadrature attains the same rate with respect to $k$ as the optimal one, as the slope of the straight line of the trapezoidal quadrature is the same as the optimal one.

We remark that the convergence behavior depends on the distribution of eigenvalues. Our analysis in \cref{sec: Optimal} views the desired spectrum and undesired spectrum as a disk and the complement of a disk. While the eigenvalues of a matrix are discrete points in these regions. There would be the case that the discrete eigenvalues avoid all bad areas in both the numerator and denominator of \cref{eq: ratio} with Gauss quadrature and have a small ratio $\frakR$ of \cref{eq: ratio}. In such a case, the rational filter with Gauss quadrature could outperform the rational filter with trapezoidal quadrature for some matrices. Without prior knowledge of the distribution of eigenvalues, the trapezoidal-quadrature-based filter is a near-optimal choice.

\subsection{Composite rule with subspace iteration}
\label{subsec: comparesimplecomposite}

We compare \cref{alg: composite} against HFEAST with $k_1=k_2=8$ and $k=k_1\cdot k_2=64$.

The class of non-Hermitian generalized eigenvalue problems comes from the model order reduction tasks~\cite{alignment, PRIMA} in the circuit simulation~\cite{circuitsimulation}. Matrices are constructed based on quasi-two-dimensional square power grids of size $n_x \times n_x \times 10$. The non-Hermitian matrix pencil is $(G, C)$, and the pattern and distribution of eigenvalues for $n_x=10$ are shown in \cref{fig: circuit}. One can find the matrix construction details in \cref{sec: construct}. \Cref{tab:circuitinformation} lists information about matrices used in our
numerical experiments as well as their target regions. The last column of \cref{tab:circuitinformation} includes the runtime ratio of one matrix factorization and one triangular solve. In all cases, there are 20 eigenvalues in their target regions and we adopt $n_\mathrm{col} = 24$. Reference eigenvalues are calculated by \texttt{eigs} in MATLAB. The stopping criteria of GMRES is $10^{-9}$. The convergence behaviors are illustrated in \cref{fig: circuiterror}. Runtime is reported in \cref{tab: circuitperformance}. The italic values therein are estimated numbers since the simple rule runs out of memory for those settings. 

\begin{figure}[htb]
    \centering
    \subfigure[]{    
        \begin{minipage}{0.5\linewidth}
        \includegraphics[scale=0.5]{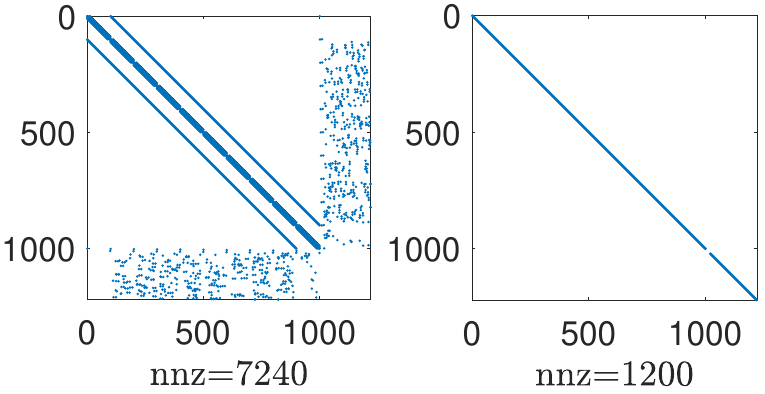}
    \end{minipage}
    }
    \subfigure[]{    
        \begin{minipage}{0.45\linewidth}
        \includegraphics[scale=0.35]{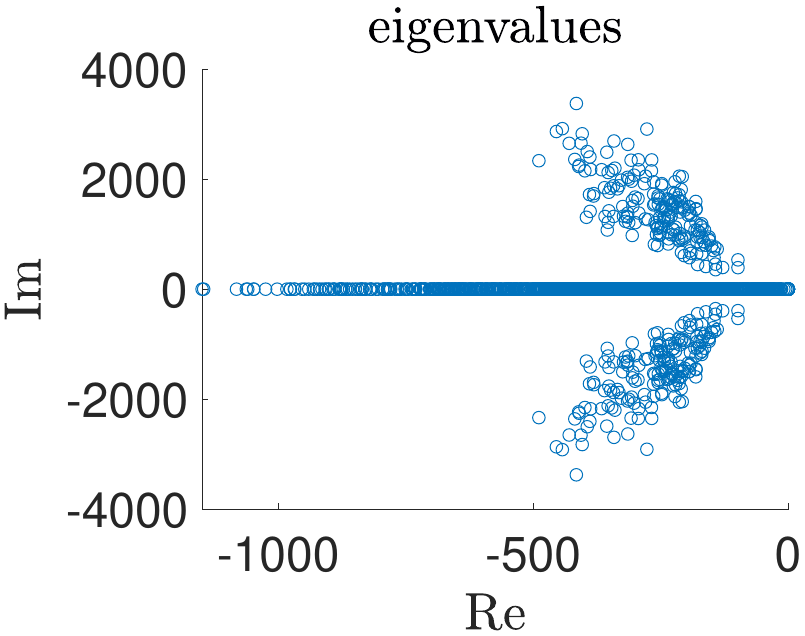}
    \end{minipage}
    } \caption{(a) Patterns of $G$ and $C$ when $n_x = 10$; (b)
    Eigenvalues distribution.} 
    \label{fig: circuit}
\end{figure}

\begin{table}[htb]
    \centering
    \caption{Matrix information. Columns show sizes and the number of nonzeros (nnz) of the $G+C$ matrix for various $n_x$. The centers and radii of target regions are included and each encloses 20 eigenvalues. The last column includes the runtime ratio of one matrix factorization and one triangular solve.}
    \label{tab:circuitinformation}
    \begin{tabular}{crrcr}
        \toprule
        $n_x$ & Size & nnz & $(c,r)$ & $C_{\mathrm{factor}}/C_{\mathrm{apply}}$ \\
        \toprule
        10  & 1,220     & 7,440      & ($-200+1000\imath$,90) & 33.706\\
        100 & 120,020   & 776,040    & ($-101+22\imath$,3)     & 47.903\\
        200 & 480,020   & 3,112,040  & ($-24+4.7\imath$,2.1)    & 71.119\\
        400 & 1,920,020 & 12,464,040 & ($-5.3+1\imath$,0.9)      & 118.037\\
        \bottomrule
    \end{tabular}
\end{table}

\begin{figure}[htb]
    \centering
    \includegraphics[width=0.3\textwidth]{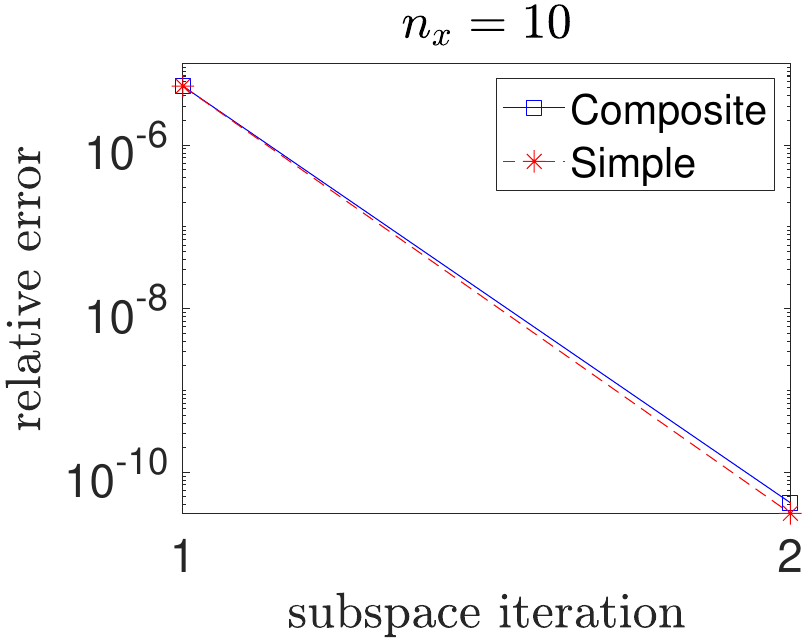}
    \includegraphics[width=0.3\textwidth]{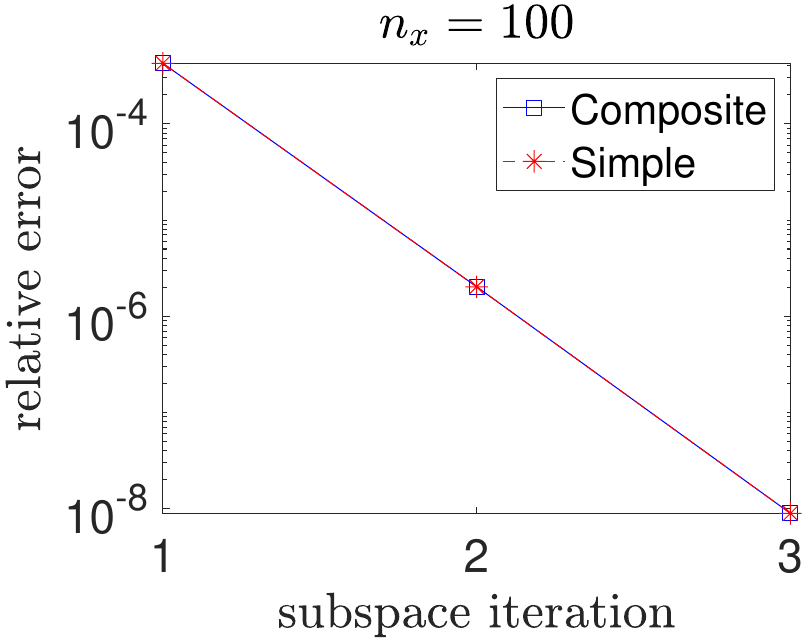}
    \includegraphics[width=0.3\textwidth]{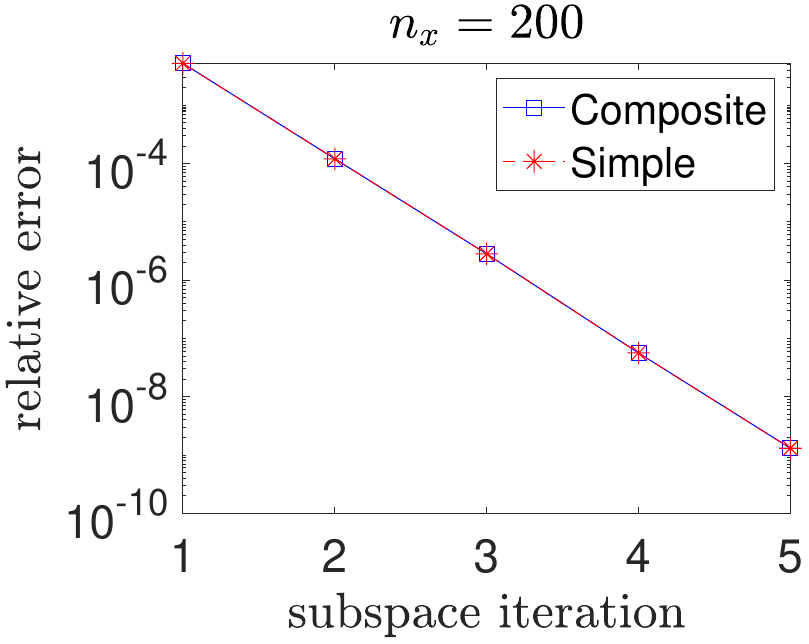}
    \caption{Convergence of the simple rule and the composite rule.} 
    \label{fig: circuiterror}
\end{figure}

\begin{table}
    \centering
    \caption{Runtime (second) of the simple rule and the composite rule for matrices in \cref{tab:circuitinformation}. Italic values are estimated due to the out-of-memory limit. \texttt{Comp} means the composite rule.}
    \label{tab: circuitperformance}
    \begin{tabular}{ccccccc}
        \toprule
        \multirow{2}{*}{$n_x$} & \multicolumn{2}{c}{total}
        & \multicolumn{2}{c}{factorization} & \multicolumn{2}{c}{solving}\\
        \cmidrule(r){2-3}
        \cmidrule(r){4-5}
        \cmidrule(r){6-7}
        & Simple & Comp & Simple & Comp & Simple & Comp \\
        \midrule
        10  &$1.5\times 10^{0}$&$2.8\times 10^{0}$&$6.2\times 10^{-1}$&$7\times 10^{-2}$&$5.6\times 10^{-1}$&$1.9\times 10^{0}$\\
        100 &$1.0\times 10^{3}$&$2.5\times 10^{3}$&$4.1\times 10^{2}$&$5.1\times 10^{1}$&$6.0\times 10^{2}$&$2.3\times 10^{3}$\\
        200 &$9.0\times 10^{3}$&$2.8\times 10^{4}$&$3.4\times 10^3$&$4.2\times 10^2$&$5.6\times 10^3$&$2.6\times 10^4$\\
        400 &$\mathit{5.9\times 10^{4}}$&$2.2\times 10^{5}$&$\mathit{2.9\times 10^{4}}$&$3.6\times 10^{3}$&$\mathit{3.1\times 10^{4}}$&$2.1\times 10^{5}$\\
        \bottomrule
    \end{tabular}
\end{table} 

\Cref{fig: circuiterror} shows that the composite rule converges in a similar fashion to the simple rule. This indicates that both the GMRES and the direct solver achieve sufficiently good accuracy. In most cases we have tested, the subspace iteration converges effectively when many poles are used, i.e., usually in a few iterations.

The composite rule establishes a trade-off between the number of matrix factorizations and the number of triangular solves in GMRES. \Cref{tab: circuitperformance} shows a comparison of the simple rule and the composite rule in two parts: runtime and memory. As shown in the last column of \cref{tab:circuitinformation}, the runtime ratio between the factorization and the triangular solve grows as the matrix size increases, which is because the matrix factorization is of higher order complexity compared to that of the triangular solve. Hence, reducing the number of factorizations as in the composite rule would be beneficial for large matrices. 

However, as shown in all cases of \cref{tab: circuitperformance}, the
simple rule outperforms the composite rule in total runtime, because
the solving time dominates. The domination comes from the increased
number of subspace iterations, which is due to the denser spectrum of
the larger case. It is hard to tell when the composite rule
outperforms the simple rule for a specific case. The guidance is that
when the factorization time dominates, the composite rule can help us
substitute the solving time for the factorization time, which reduces
the total runtime. 

Regarding the memory cost, the simple rule costs
about $k_2$ times more than that of the composite rule. In these
examples, we find that the simple rule with $n_x = 400$ already
exceeds our memory limit, whereas the composite rule could solve
eigenvalue problems with $n_x = 400$ or even larger. Another benefit
of the composite rule is that it allows us to utilize $R_{c,r,k}(B^{-1}A)$ for 
large $k$ with limited memory.

\subsection{Composite rule without subspace iteration}
\label{subsec:expalgorithm2}

This experiment aims to show that with large $k_2$, the composite rule will converge without subspace iteration, and the GMRES iteration number does not increase dramatically when $k_2$ increases. Such an observation means the strategy doubling $k_2$ each time in \cref{alg: composite2} would be affordable compared to the case with optimal $k_2$. Throughout this section, we reuse matrix pencils in \hyperref[subsec: comparesimplecomposite]{section~\ref*{subsec: comparesimplecomposite}}. We perform three algorithms in this section: the simple rule with $k = 8$,
the composite rule with $k_1 = 8$, and various choices of fixed $k_2$ (\cref{alg: composite}), and \cref{alg: composite2} with $k_1 = 8$. Also, various choices of the numbers of columns are explored.

\begin{table}[htb]
    \centering
    \caption{Details of the simple rule and \cref{alg: composite2}. The column $p$ shows the number of filtered eigenpairs, i.e., the number of approximate eigenpairs inside the region whose relative error is less than $\tau_g$. The column $e$ shows the relative error when the algorithm converges or the limitation of subspace iteration is attained. The column $n_\mathrm{iter}$ shows the times of applying $G$ to a set of vectors for the simple rule. While for \cref{alg: composite2}, the column $n_\mathrm{iter}$ shows the maximum GMRES step of different vectors since the GMRES step will change with the vector. When not all eigenpairs are filtered, we use ``-'' for $e$ and $n_\mathrm{iter}$, since the algorithm will fail to filter the eigenpairs of interest even if we run the algorithm with infinite $n_{\mathrm{iter}}$, or $n_{\mathrm{iter}}$ would be no less than 400 for the target precision $\tau$, which can be derived by two equations $e^{\lfloor n_\mathrm{iter}/100\rfloor}=\tau$ and $\sqrt[4]{\tau}=\tau_g<e$.}
    \label{tab: simplevsoneshot}
    \begin{tabular}{ccccccc}
        \toprule
        \multirow{2}{*}{$(n_x,n_\mathrm{col})$} & \multicolumn{3}{c}{Simple}& \multicolumn{3}{c}{Composite}\\
        & $p$ & $e$ & $n_\mathrm{iter}$ & $p$ & $e$ & $n_\mathrm{iter}$ \\
        \midrule
        (100,21) & 19 & -                & -              & 20 &$1.0\times 10^{-10}$& 39\\
        (100,22) & 20 &$7.7\times10^{-9}$& 64             & 20 &$9.6\times 10^{-11}$& 39\\
        (100,24) & 20 &$7.0\times10^{-9}$& 35             & 20 &$4.8\times 10^{-11}$& 39\\
        \midrule
        (200,21) & 20 &$2.0\times10^{-4}$& $\mathit{217}$ & 20 &$3.1\times 10^{-9}$& 51\\
        (200,22) & 20 &$4.2\times10^{-7}$& $\mathit{126}$ & 20 &$1.9\times 10^{-10}$& 51\\
        (200,24) & 20 &$8.2\times10^{-9}$& 57             & 20 &$2.9\times 10^{-11}$& 51\\
        \midrule
        (400,21) & 19 & -                & -              & 20 &$2.3\times 10^{-9}$& 87\\
        (400,22) & 19 & -                & -              & 20 &$3.5\times 10^{-9}$& 87\\
        (400,24) & 20 &$7.0\times10^{-9}$& 46             & 20 &$2.3\times 10^{-9}$& 87\\
        \bottomrule
    \end{tabular}
\end{table} 

\begin{figure}[htb]
    \centering
    \includegraphics[width=0.32\textwidth]{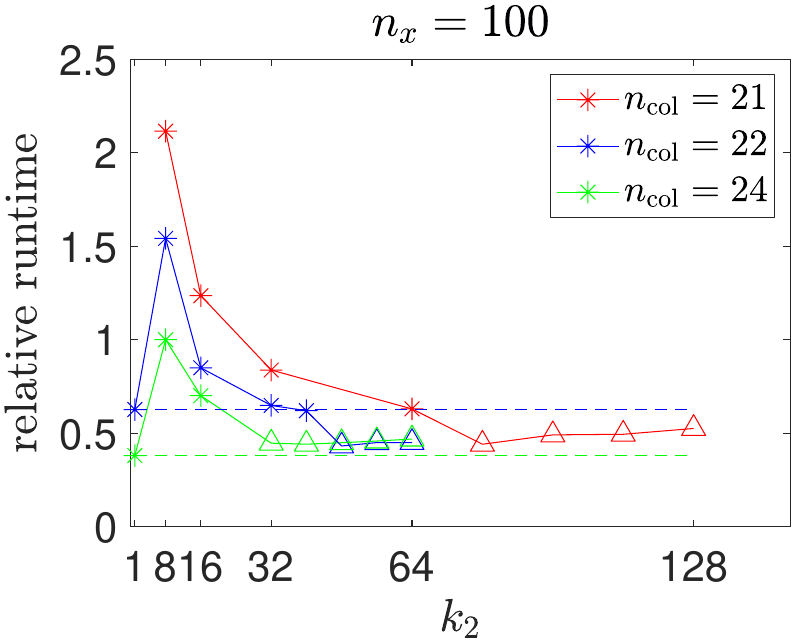}
    \includegraphics[width=0.32\textwidth]{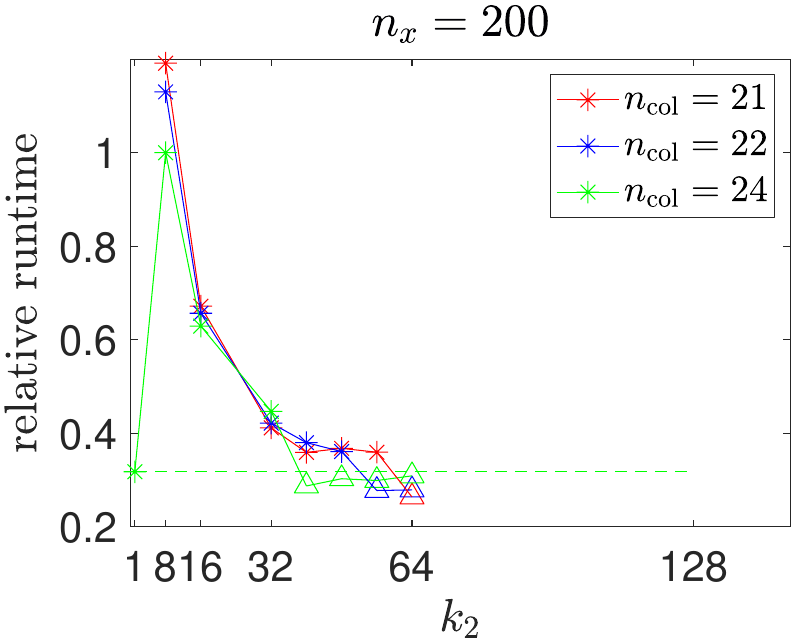}
    \includegraphics[width=0.32\textwidth]{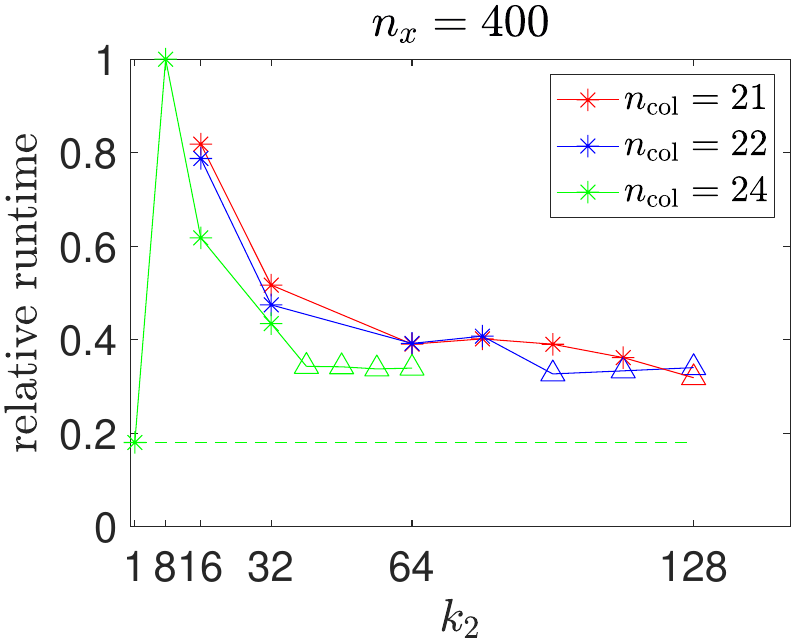}
    \caption{Relative runtime of \cref{alg: composite} with $k_1 = 8$ and various $k_2$. $k_2=1$ represents the simple rule. The runtime is scaled by the runtime of \cref{alg: composite} with $k_1=k_2=8$ and  $n_\mathrm{col}=24$. We do not plot the point that fails to converge.  The star marks denote those subspace iterations converge in more than one iteration, whereas the triangle marks denote those without subspace iteration.}
    \label{fig: subspacevscomposite}
\end{figure}

We set the simple rule to have no more than 100 subspace iterations, while for the composite rule, the limitation is 10. We also terminate \cref{alg: composite} when in the first subspace iteration all eigenpairs converge to target tolerance $\tau$ for relative error. All 20 eigenpairs inside are filtered and the relative errors are less than $\tau$ for the composite rule. \cref{fig: subspacevscomposite} illustrates the relative runtime of \cref{alg: composite} for various $k_2$, and \Cref{tab: simplevsoneshot} reports details of the simple rule and \cref{alg: composite2}. The relative runtime of \cref{alg: composite2} could be read from \cref{fig: subspacevscomposite} from those first triangle marks at $k_2$ being a power of two. \cref{tab: simplevsoneshot} shows more details of the simple rule and \cref{alg: composite2}. We can estimate the $n_\mathrm{iter}$ for the cases of the simple rule that all eigenpairs of interest are filtered but the relative error can not decrease to target $\tau$, from the equation $e^{\lfloor n_\mathrm{iter}/100\rfloor}=\tau$. We use italic numbers to distinguish estimated $n_\mathrm{iter}$ from the
real one.

In \cref{tab: simplevsoneshot}, all three choices of $n_\mathrm{col}$ overestimate the actual number of eigenvalues in the region. The simple rule with a fixed $k = 8$ fails to converge when $n_\mathrm{col}$ is not sufficiently large, e.g., $n_\mathrm{col}=21,22$. In contrast, \cref{alg: composite2} converges in all scenarios. Based on this experiment and other experiments we tried but not listed in the current paper, the convergence of the simple rule is sensitive to the choice of two hyperparameters, $k$ and $n_\mathrm{col}$. While the convergence of \cref{alg: composite2} is more robust. In the worst-case scenario, when the given region is enclosed by many unwanted eigenvalues, extremely large $n_\mathrm{col}$ would be needed to resolve the convergence issue in the simple rule. When the simple rule and \cref{alg: composite2} converge, the latter one outperforms the former one for small $n_\mathrm{col}$, see all the red curves and blue curves in \cref{fig: subspacevscomposite}. From green curves, we know that when $n_\mathrm{col}$ increases, these two methods become comparable on runtime.

\Cref{fig: subspacevscomposite} also explores the optimal choice of $k_2$ without subspace iteration, i.e., the first triangle marks on each curve. We find that the optimal $k_2$ is not necessary $2^p k_1$ as in \cref{alg: composite2}. Besides the factorization cost, the dominant computational cost of the composite rule is the multi-shift GMRES iteration number, i.e., the number of applying $G$~\cref{eq: linearsolve1}. Increasing $k_2$ would add more shifts to the multi-shift GMRES but not necessarily increase iteration number, and the extra cost of
orthogonalization is negligible compared to that of applying $G$. In all curves in \cref{fig: subspacevscomposite}, we observe that, after their first triangle marks, the relative runtime mostly stays flat and increases extremely slowly. Hence, even if \cref{alg: composite2} is not using the optimal $k_2$, the runtime of \cref{alg: composite2} is almost the same as
that with optimal $k_2$. We conclude that \cref{alg: composite2} is an efficient and robust eigensolver and is more preferred than \cref{alg: composite}.

\begin{remark}
    We remark on the hyperparameter choices in \cref{alg: composite2}. Given a matrix pencil and a region, an overestimation $n_\mathrm{col}$ of the number of eigenvalues $s$ is required. If we perform factorizations and triangular solves sequentially, we may need to choose a proper $k_1$ depending on whether factorizations are more expansive than that of triangular solves. In the view of parallelization, the $k_1$ factorizations and the corresponding triangular solves are ideally parallelizable. Hence, we would set $k_1$ as large as possible to fully use the computation resource and reduce the GMRES iterations.
\end{remark}

\section{Conclusion}
\label{sec: conclusion}

This paper finds the optimal rational function in the sense of spectrum separation via Zolotarev's third function. The optimal rational function leads to the traditional inverse power method in numerical linear algebra. Discretizing the contour integral with the standard trapezoidal quadrature results in an asymptotically optimal rational function. Further, we derive the composite rule of the trapezoidal quadrature, i.e., $R_{c,r,k}(z) = R_{0,1,k_2}(M(R_{c,r,k_1}(z)))$ for $k=k_1k_2$ being a positive integer factorization and $M$ being a M\"{o}bius transform.

Based on the composite rule, we propose two eigensolvers for the generalized non-Hermitian eigenvalue problems, \cref{alg: composite} and \cref{alg: composite2}. Both algorithms adopt direct matrix factorizations for the inner rational function evaluation and multi-shift GMRES for the outer rational function. Compared to the simple rule with the same number of poles, both composite-rule-based algorithms reduce the number of factorizations and reduce the memory requirement. This is of fundamental importance when matrices are of large scale. The difference between the
two composite algorithms is the subspace iteration. In \cref{alg: composite}, both $k_1$ and $k_2$ are hyperparameters, and the algorithm adopts the subspace iteration to converge to desired eigenpairs. In contrast, \cref{alg: composite2} is designed without subspace iteration. \cref{alg: composite2} adopts $k_1$ as a hyperparameter and gradually increases $k_2$ until the rational function approximation is accurate enough and the algorithm converges to desired eigenpairs without subspace iteration. As $k_2$ increases in \cref{alg: composite2}, by the property of
multi-shift GMRES, the number of GMRES iterations, i.e., the number of applying $G$, increases very mildly. Hence, compared to the simple rule and \cref{alg: composite}, \cref{alg: composite2} is a robust and efficient eigensolver.

We demonstrate the efficiency of the proposed algorithms by synthetic and practical generalized non-Hermitian eigenvalue problems. Numerical results show that \cref{alg: composite} outperforms the simple rule only if the matrix factorization is much more expensive than the triangular solve. The convergence of \Cref{alg: composite2} is not sensitive to hyperparameter $n_\mathrm{col}$. In terms of the runtime, \cref{alg: composite2} either outperforms or is comparable to the simple rule. A suggestion for the hyperparameter choices of \cref{alg: composite2} is also provided based on both the analysis and numerical results.

\section*{Acknowledgement}

We thank Chao Yang for the helpful discussions. This work is supported in part by the National Natural Science Foundation of China (12271109) and Shanghai Pilot Program for Basic Research - Fudan University 21TQ1400100 (22TQ017).

\appendix

\section{GMRES iteration number}
\label{sec: decayGMRES}

As we mentioned in \cref{subsec: eigensolver}, the multi-shift GMRES will converge faster as the subspace iteration goes. \Cref{tab: circuitbehavior} reports the number of triangular solves in both the simple and the composite rules and its GMRES iteration number. The normalized last column of \cref{tab: circuitbehavior} is visualized in \cref{fig: decays}.

\begin{table}
    \centering
    \small
    \caption{Number of triangular solves of
    the simple rule and composite rule in each subspace iteration are
    shown at the columns of ``Solving'' respectively. The column
    ``$n_\mathrm{iter}$'' shows the number of GMRES iterations in each
    subspace iteration. Italic values are estimated since the required
    memory is beyond our machine memory. }
    \label{tab: circuitbehavior}
    \begin{tabular}{clll}
        \toprule
        \multirow{2}{*}{$n_x$} & Simple &
        \multicolumn{2}{c}{Composite} \\
        \cmidrule(r){3-4}
        & Solving & $n_\mathrm{iter}$ & Solving \\
        \midrule
        10  & [1536,1536]  & [32,22] & [6128,3200]\\
        100 & [1536,1536,1536]& [39,32,31]& [7488,5504,4472]\\
        200 & [1536,1536,1536,1536,1536]& [51,43,38,37,37]& [9680,7784,6504,6296,5392]\\
        400 & $\mathit{[1536,1536,1536,1536,1536]}$ & [86,84,75,67,58]&[16328,13968,10552,6896,3952]\\
        \bottomrule
    \end{tabular}
\end{table} 

\begin{figure}
    \centering
    \includegraphics[scale=0.5]{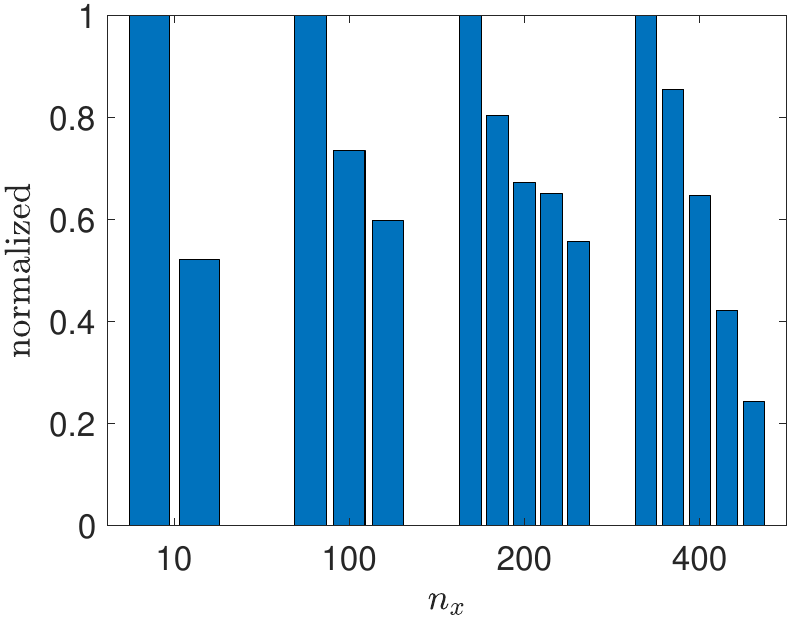}
    \caption{Normalized solving cost in the composite rule. In each test matrix, the bars show the number of triangular solves in each subspace iteration, which are normalized by the number of triangular solves in the first subspace iteration.}
    \label{fig: decays}
\end{figure}

\Cref{tab: circuitbehavior} shows that the number of triangular solves in each subspace iteration of the simple rule stays constant, whereas that for the composite rule decreases. Notice that the $n_\mathrm{iter}$ decays much slower than the number of triangular solves in the composite rule. That is because different columns converge to eigenvectors with different rates.

\section{Construction of matrix}
\label{sec: construct}

Matrices are constructed based on quasi-two-dimensional square power grids of size $n_x \times n_x \times 10$. The non-Hermitian matrix pencil is $(G, C)$ taking the block form as,
\begin{equation*}
    G=\begin{bmatrix}
        G_{11} & G_{12} \\
        G_{21} & 0
    \end{bmatrix}, \quad
    C = \begin{bmatrix}
        C_c & 0 \\
        0 & L
    \end{bmatrix}.
\end{equation*}
In particular, $G_{11}$ represents the conductance matrix as $G_{11} = L_{n_x} \otimes I_{n_x} \otimes I_{10} + I_{n_x} \otimes L_{n_x} \otimes I_{10} + \frac{1}{10} I_{n_x} \otimes I_{n_x} \otimes L_{10}$, where $L_n$ is a weighted one-dimensional Laplacian matrix of size $n \times n$ as 
\begin{equation*}
    L_n = \frac{n}{100}
    \begin{bmatrix}
        1 & -1 & & & \\
        -1 & 2 & -1 & &  \\
        & \ddots & \ddots & \ddots & \\
        &       &   -1  &   2  &  -1 \\
        &       &       &  -1 &   1
    \end{bmatrix}_{n \times n}
\end{equation*}
and $I_n$ is an identity matrix of size $n \times n$. The off-diagonal blocks of $G$ admit $G_{12} = -G_{21}^\top \in \bbR^{10 n_x^2 \times (20 + 2n_x^2)}$ with entries being $\pm 1$ or zero. The first 20 columns of $G_{12}$ correspond to 20 input ports at $(\cdot, 1, 1)$ and $(\cdot, n_x, 10)$ two edges, where the corresponding rows have a positive one. The rest $2n_x^2$ columns of $G_{12}$ correspond to inductors. We uniformly randomly pick $2n_x^2$ interior nodes from grid nodes and add an inductor with their neighbor nodes on the same layer. The corresponding $G_{12}$ part is the incidence matrix of the inductor graph. Matrix $L$ is a diagonal matrix of size $20 + 2n_x^2$. The first $20 \times 20$ block of $L$ is zero. The later $2n_x^2 \times 2n_x^2$ block has diagonal entries uniformly randomly sampled from $[0.5,1.5] \cdot n_x \cdot 10^{-4}$ being the inductance of inductors. The submatrix $C_c$ represents capacitors in the circuit. For each node, we add a grounded capacitor with capacitance uniformly randomly sampled from $[0.5,1.5] \cdot 10^{-3}$, which means $C_c$ is a diagonal matrix whose elements are equal to the capacitances.
\bibliographystyle{siamplain}
\bibliography{references}

\begin{thebibliography}{10}

\bibitem{multihiftGMRES}
{\sc T.~Bakhos, P.~K. Kitanidis, S.~Ladenheim, A.~K. Saibaba, and D.~B. Szyld}, {\em Multipreconditioned gmres for shifted systems}, SIAM Journal on Scientific Computing, 39 (2017), pp.~S222--S247, \url{https://doi.org/10.1137/16M1068694}.

\bibitem{alignment}
{\sc P.~Gross, R.~Arunachalam, K.~Rajagopal, and L.~Pileggi}, {\em Determination of worst-case aggressor alignment for delay calculation}, in 1998 IEEE/ACM International Conference on Computer-Aided Design. Digest of Technical Papers (IEEE Cat. No.98CB36287), 1998, pp.~212--219, \url{https://doi.org/10.1145/288548.288616}.

\bibitem{RIM}
{\sc R.~Huang, J.~Sun, J.~Sun, and C.~Yang}, {\em Recursive integral method with cayley transformation}, Numerical Linear Algebra with Applications, 25 (2017).

\bibitem{CIRR}
{\sc T.~Ikegami and T.~Sakurai}, {\em {Contour integral eigensolver for non-Hermitian systems: a rayleigh-ritz-type approach}}, Taiwanese Journal of Mathematics, 14 (2010), pp.~825 -- 837, \url{https://doi.org/10.11650/twjm/1500405869}.

\bibitem{IKEGAMI20101927}
{\sc T.~Ikegami, T.~Sakurai, and U.~Nagashima}, {\em A filter diagonalization for generalized eigenvalue problems based on the sakurai–sugiura projection method}, Journal of Computational and Applied Mathematics, 233 (2010), pp.~1927--1936, \url{https://doi.org/10.1016/j.cam.2009.09.029}.

\bibitem{dualFEAST}
{\sc J.~Kestyn, E.~Polizzi, and P.~T. Peter~Tang}, {\em Feast eigensolver for non-hermitian problems}, SIAM Journal on Scientific Computing, 38 (2016), pp.~S772--S799, \url{https://doi.org/10.1137/15M1026572}.

\bibitem{Lehoucq1998ARPACKUG}
{\sc R.~B. Lehoucq, D.~C. Sorensen, and C.~Yang}, {\em Arpack users' guide - solution of large-scale eigenvalue problems with implicitly restarted arnoldi methods}, in Software, environments, tools, 1998.

\bibitem{li2021interior}
{\sc Y.~Li and H.~Yang}, {\em Interior eigensolver for sparse hermitian definite matrices based on zolotarev’s functions}, Communications in Mathematical Sciences, 19 (2021), pp.~1113--1135.

\bibitem{QZ}
{\sc C.~B. Moler and G.~W. Stewart}, {\em An algorithm for generalized matrix eigenvalue problems}, SIAM Journal on Numerical Analysis, 10 (1973), pp.~241--256, \url{https://doi.org/10.1137/0710024}.

\bibitem{circuitsimulation}
{\sc F.~N. Najm}, {\em Circuit Simulation}, Wiley-IEEE Press, 2010.

\bibitem{PRIMA}
{\sc Odabasioglu, Celik, and Pileggi}, {\em Prima: passive reduced-order interconnect macromodeling algorithm}, in 1997 Proceedings of IEEE International Conference on Computer Aided Design (ICCAD), 1997, pp.~58--65, \url{https://doi.org/10.1109/ICCAD.1997.643366}.

\bibitem{petrushev_popov_1988}
{\sc P.~P. Petrushev and V.~A. Popov}, {\em Rational Approximation of Real Functions}, Encyclopedia of Mathematics and its Applications, Cambridge University Press, 1988, \url{https://doi.org/10.1017/CBO9781107340756}.

\bibitem{FEASTorigin}
{\sc E.~Polizzi}, {\em Density-matrix-based algorithm for solving eigenvalue problems}, Phys. Rev. B, 79 (2009), p.~115112, \url{https://doi.org/10.1103/PhysRevB.79.115112}.

\bibitem{GMRES}
{\sc Y.~Saad}, {\em Iterative Methods for Sparse Linear Systems}, Society for Industrial and Applied Mathematics, second~ed., 2003, \url{https://doi.org/10.1137/1.9780898718003}.

\bibitem{SAKURAI}
{\sc T.~Sakurai and H.~Sugiura}, {\em A projection method for generalized eigenvalue problems using numerical integration}, Journal of Computational and Applied Mathematics, 159 (2003), pp.~119--128, \url{https://doi.org/10.1016/S0377-0427(03)00565-X}.

\bibitem{STARKE1992115}
{\sc G.~Starke}, {\em Near-circularity for the rational zolotarev problem in the complex plane}, Journal of Approximation Theory, 70 (1992), pp.~115--130, \url{https://doi.org/10.1016/0021-9045(92)90059-W}.

\bibitem{krylovschur}
{\sc G.~W. Stewart}, {\em A krylov--schur algorithm for large eigenproblems}, SIAM Journal on Matrix Analysis and Applications, 23 (2002), pp.~601--614, \url{https://doi.org/10.1137/S0895479800371529}.

\bibitem{HFEAST}
{\sc G.~Yin}, {\em A harmonic feast algorithm for non-hermitian generalized eigenvalue problems}, Linear Algebra and its Applications, 578 (2019), pp.~75--94, \url{https://doi.org/10.1016/j.laa.2019.04.036}.

\bibitem{BFEAST}
{\sc G.~Yin, R.~H. Chan, and M.-C. Yeung}, {\em A feast algorithm with oblique projection for generalized eigenvalue problems}, Numerical Linear Algebra with Applications, 24 (2017), p.~e2092, \url{https://doi.org/10.1002/nla.2092}.

\end{thebibliography}
\end{document}